\newcommand{\nn}{\nonumber}
\newtheorem{theorem}{Theorem}[section]
\newtheorem{algorithm}[theorem]{Algorithm}
\newtheorem{proposition}[theorem]{Proposition}
\newtheorem{definition}[theorem]{Definition}
\newtheorem{remark}[theorem]{Remark}
\newcommand{\be}{\begin{equation}}
\newcommand{\ee}{\end{equation}}
\newcommand{\ba}{\begin{array}}
\newcommand{\ea}{\end{array}}
\newcommand{\bea}{\begin{eqnarray}}
\newcommand{\eea}{\end{eqnarray}}
\newcommand{\reff}[1]{(\ref{#1})}
\newcommand{\cred}[1]{{\color{red}{#1}}}
\newcommand{\bc}{\begin{center}}
\newcommand{\ec}{\end{center}}
\newcommand{\st}{\mathrm{s.t.}}
\newcommand{\beq}{\begin{equation}}
\newcommand{\eeq}{\end{equation}}
\newcommand{\mtA}{\mathcal{A}}
\newcommand{\mtB}{\mathcal{B}}
\newcommand{\mbR}{\mathbb{R}}
\newcommand{\mbC}{\mathbb{C}}
\newcommand{\rank}{\mathrm{rank}}
\newtheorem{example}{Example}[section]
\numberwithin{equation}{section}  
\numberwithin{table}{section}
\numberwithin{figure}{section}
\newcommand{\lmd}{\lambda}
\newcommand{\eps}{\epsilon}
\newcommand{\dt}{\delta}
\newcommand{\N}{\mathbb{N}}
\def\af{\alpha}
\def\rank{\mbox{rank}}
\newcommand{\prm}{\prime}
\newcommand{\baray}{\begin{array}}
\newcommand{\earay}{\end{array}}
\newcommand{\bca}{\begin{cases}}
\newcommand{\eca}{\end{cases}}
\newcommand{\bcen}{\begin{center}}
\newcommand{\ecen}{\end{center}}
\newcommand{\bbm}{\begin{bmatrix}}
\newcommand{\ebm}{\end{bmatrix}}
\newcommand{\bmx}{\begin{matrix}}
\newcommand{\emx}{\end{matrix}}
\newcommand{\bpm}{\begin{pmatrix}}
\newcommand{\epm}{\end{pmatrix}}
\newcommand{\btab}{\begin{tabular}}
\newcommand{\etab}{\end{tabular}}
\begin{document}

\title{All Real Eigenvalues of Symmetric Tensors}

\author{Chun-Feng Cui}
\address{
State Key Laboratory of Scientific and Engineering Computing,
Institute of Computational Mathematics and Scientific/Engineering Computing,
Academy of Mathematics and Systems Science,
Chinese Academy of Sciences,  P.O. Box 2719,  Beijing 100190,  P.R. China.
}  \email{cuichf@lsec.cc.ac.cn}

\author{Yu-Hong Dai}
\address{
State Key Laboratory of Scientific and Engineering Computing,
Institute of Computational Mathematics and Scientific/Engineering Computing,
Academy of Mathematics and Systems Science,
Chinese Academy of Sciences,  P.O. Box 2719,  Beijing 100190,  P.R. China.
}  \email{dyh@lsec.cc.ac.cn}

\author{Jiawang Nie}
\address{
Department of Mathematics,  University of California San Diego,  9500
Gilman Drive,  La Jolla,  California 92093,  USA.
} \email{njw@math.ucsd.edu}

\begin{abstract}
This paper studies how to compute all real eigenvalues,
associated to real eigenvectors, of a symmetric tensor.
As is well known,  the largest or smallest eigenvalue can be found by solving
a polynomial optimization problem,  while the other middle ones
cannot. We propose a new approach
for computing all real eigenvalues sequentially,  from the largest to the smallest.
It uses Jacobian semidefinite relaxations in polynomial optimization.
We show that each eigenvalue can be computed by
solving a finite hierarchy of semidefinite relaxations.
Numerical experiments are presented to show how to do this.
\end{abstract}

\keywords{symmetric tensors, eigenvalues of tensors, polynomial optimization,
Lasserre's hierarchy, semidefinite relaxation.
}

\subjclass{15A18,  15A69,  90C22}

\maketitle

\section{Introduction}

Let $\mbR$ be the real field,  and let $m$ and $n$ be positive integers.
An $n$-dimensional tensor of order $m$ is an array indexed
by integer tuples $(i_1, \ldots, i_m)$ with $1 \leq i_j \leq n$ ($j=1, \ldots, m$).
Let $\mathtt{T}^m(\mbR^n)$ denote the space of all such real tensors.
A tensor $\mathcal{A} \in \mathtt{T}^m(\mbR^n)$ is indexed as
\[
\mathcal{A} = (\mathcal{A}_{i_1 \ldots  i_m} )_{1\leq i_1,  \ldots,  i_m \leq n}.
\]
The tensor $\mathcal{A}$ is {\it symmetric} if each entry
$\mathcal{A}_{i_1 \ldots  i_m}$ is invariant with respect to
all permutations of $(i_1, \ldots,  i_m)$. Let
$\mathtt{S}^m(\mbR^n)$ be the space of all symmetric tensors in $\mathtt{T}^m(\mbR^n)$.
For $\mathcal{A} \in \mathtt{S}^m(\mbR^n)$,  we denote the polynomial
\[
 \mathcal{A} x^m :=\sum_{1\leq i_1,  \ldots,  i_m \leq n}
 \mathcal{A}_{i_1  \ldots  i_m}x_{i_1}\cdots x_{i_m}.
 \]
Clearly,  $\mathcal{A} x^m$ is a form (i.e.,  a homogenous polynomial) of degree $m$
in $x:=(x_1, \ldots,  x_n)$.
For a positive integer $k \leq m$,  denote
\[
x^{[k]} := ((x_1)^k,  \ldots,  (x_n)^k).
\]
Define $\mathcal{A} x^k$ to be the symmetric tensor
in $\mathtt{S}^{m-k}(\mbR^n)$ such that
\[
(\mathcal{A} x^k)_{i_1, \ldots,  i_{m-k} } :=
\sum_{ 1 \leq j_1,  \ldots,  j_k \leq n}
\mathcal{A}_{ i_1 \ldots  i_{m-k}  j_1  \ldots  j_k } x_{j_1} \cdots x_{j_k}.
\]
So,  $\mathcal{A} x^{m-1}$ is an $n$-dimensional vector.

An important property of symmetric tensors is their eigenvalues.
Eigenvalues of tensors are introduced in Qi~\cite{Qi05} and Lim~\cite{Lim05}.
Unlike matrices,  there are various definitions of eigenvalues for tensors.
Useful ones include H-eigenvalues,  Z-eigenvalues~(cf.~\cite{Qi05}),
and D-eigenvalues~(cf.~\cite{Qi08}).  Eigenvalues of symmetric tensors have applications
in signal processing~(cf.~\cite{Qi03}),
diffusion tensor imaging (DTI)~(cf.~\cite{chen2013positive,Qi08,Qi10}),
%
%
automatic control~(cf. \cite{ni2008eigenvalue}), etc.
The tensor eigenvalue problem is an important
subject of multi-linear algebra. We refer to~\cite{Kolda2009, Lim13, QiSunWang07}
for introductions to tensors and their applications.

Since there are various definitions of eigenvalues,
we here give a unified approach to define them.
It is a variation of the approach
introduced in~\cite{chang2009eigenvalue,Lim05,Qi05}.
Let $\mbC$ be the complex field.

\begin{definition}
Let $\mtA \in \mathtt{S}^m(\mbR^n)$ and $\mtB \in \mathtt{S}^{m^\prm}(\mbR^n)$
be two symmetric tensors (their orders $m,  m^\prm$ are not necessarily equal).
A number $\lambda\in \mbC$ is a $\mathcal{B}$-eigenvalue of $\mathcal{A}$
if there exists $ u \in \mbC^n$ such that
\be \label{Beig}
\mtA u^{m-1}=\lambda\,\mtB u^{m^\prm -1},  \quad \mtB u^{m^\prm} = 1.
\ee
Such $u$ is called a $\mtB$-eigenvector associated to $\lmd$,
and such $(\lmd,  u)$ is called a $\mathcal{B}$-eigenpair of $\mathcal{A}$.
\end{definition}

For cleanness of the paper,  when the tensor $\mtB$ is clear in the context,
$\mtB$-eigenvalues (resp.,  $\mtB$-eigenvectors,  $\mtB$-eigenpairs)
are just simply called eigenvalues (resp.,  eigenvectors,  eigenpairs).
When an eigenvalue $\lmd$ is real,  it may not have a real
eigenvector $u$.
An eigenpair $(\lmd,u)$ is called {\it real} if
both $\lmd$ and $u$ are real.
Throughout the paper, for convenience,
we call that $\lmd$ is a real eigenvalue if
$\lmd$ is real and it has a real eigenvector.
By the largest (resp.,  smallest) eigenvalue,
we mean the largest (resp.,  smallest) real eigenvalue.
In the paper,  we only discuss how to compute real eigenvalues.

%
%

The following special cases of $\mathcal{B}$-eigenvalues are well known.

\begin{itemize}
%
%

\item  When $m^\prm=m$ and $\mathcal{B}$ is the identity tensor (i.e.,
$\mathcal{B}x^m = x_1^m+\cdots+x_n^m$), the $\mtB$-eigenvalues are
just the H-eigenvalues (cf.~\cite{Qi05}). When $m$ is even,
a number $\lambda$ is a real H-eigenvalue of $\mathcal{A}$
if there exists $u \in \mbR^n$ such that
\be \label{Heig}
\nn \mtA u^{m-1} = \lambda\,u^{[m-1]},  \quad u_1^m + \cdots + u_n^m = 1.
\ee
Such $(\lmd,  u)$ is called an H-eigenpair.

\item When $m^\prm =2$ and $\mathcal{B}$ is such that
$\mathcal{B}x^2 = x_1^2+\cdots+x_n^2$,
the $\mtB$-eigenvalues are just the Z-eigenvalues (cf.~\cite{Qi05}).
Equivalently,  a number $\lambda$ is a real Z-eigenvalue
if there exists $u\in \mbR^n$ such that
\be \label{Zeig}
\nn \mtA u^{m-1} = \lambda\,u, \quad u_1^2+\cdots+u_n^2=1.
\ee
Such $(\lmd,  u)$ is called a Z-eigenpair.

\item Let $D \in \mbR^{n\times n}$ be a symmetric positive definite matrix.
When $m^\prm = 2$ and $\mtB$ is such that
$\mathcal{B}x^2 = x^{\text{T}}Dx$,  the $\mtB$-eigenvalues are just the
D-eigenvalues (cf.~\cite{Qi08}). Equivalently,  a number $\lmd$ is a real D-eigenvalue
if there exists $u\in \mbR^n$ such that
\be \label{Deig}
\nn \mtA u^{m-1} = \lambda\,Du, \quad u^{\text{T}}Du=1.
\ee
Such $(\lmd,  u)$ is called a D-eigenpair.

\end{itemize}

%
%

The problem of computing eigenvalues of higher order tensors (i.e.,  $m \geq 3$)
is NP-hard (cf.~\cite{hillar2009most}).
Recently,  there exists much work
for computing the largest (or smallest) eigenvalues of symmetric tensors.
Qi et al.~\cite{Qi09} proposed an elimination method
for computing the largest Z-eigenvalue when $n = 2$ and $m=3$.
Hu et al.~\cite{HHQ13} used a sequence of semidefinite relaxations
for computing extreme Z-eigenvalues.
Kolda et al.~\cite{SSHOPM} presented a shifted power method for computing
Z-eigenvalues. Zhang et al.~\cite{ZLQ12} proposed a modified power method.
Han~\cite{Han12} introduced an unconstrained optimization method
for even order symmetric tensors.
Hao et al.~\cite{Hao12} presented a sequential subspace projection method
for computing extreme Z-eigenvalues.

The existing methods are mostly for computing the largest or smallest eigenvalues.
However,  there are very few methods for computing the other
middle eigenvalues. Computing the second or other largest eigenvalues for
symmetric tensors is also an important problem in some applications.
In DTI~\cite{chen2013positive,Qi10}, the three largest Z-eigenvalues
of a diffusion tensor describe the diffusion coefficients in different directions.
As shown by Li et al.~\cite{li2013z},  the second largest Z-eigenvalue
for the characteristic tensor of a hypergraph can be used to get a lower bound
for its bipartition width.
%
%

The main goal of this paper is to compute all
real eigenvalues of a symmetric tensor. For
$\mtA \in \mathtt{S}^m(\mbR^n)$,  $\mtB \in \mathtt{S}^{m^\prm}(\mbR^n)$,  it holds that
\[
\nabla\mtA x^m = m\,\mtA x^{m-1},  \quad
\nabla\mtB x^{m^\prm} = m^{\prm}\, \mtB  x^{m^\prm-1}.
\]
Here, the symbol $\nabla$ denotes the gradient in $x$.
Thus,  \reff{Beig} is equivalent to
\[
\frac{1}{m}\, \nabla\mtA u^m  = \frac{1}{m^\prm}\, \lmd\, \nabla\mtB u^{m^\prm},  \quad \mtB u^{m^\prm} = 1.
\]
%
%
Then,  $(\lmd,  u)$ is a $\mtB$-eigenpair if and only if
$u$ is a critical point of the problem
\be \label{Bmodel}
 \max \quad  \mathcal{A}x^m  \quad \st  \quad \mtB x^{m^\prm} = 1.
\ee
Moreover,  the critical value associated to $u$ is $\lmd$,  because
\[
u^{\text{T}} \nabla\mtA u^m  = m\, \mtA u^m,  \quad
u^{\text{T}} \nabla\mtB u^{m^\prm}= m^\prm\, \mtB u^{m^\prm}.
\]
This shows that $(\lmd,  u)$ is a $\mtB$-eigenpair if and only if
$u$ is a critical point of \reff{Bmodel} with the critical value $\lmd$.
The polynomial optimization problem \reff{Bmodel}
has finitely many critical values (cf.~\cite{nie2013hierarchy}),
including both complex and real ones.
That is,  every symmetric tensor $\mtA$ has finitely many
complex and real $\mtB$-eigenvalues.
We order  the real $\mtB$-eigenvalues  monotonically as
$\lmd_1 > \lmd_2 > \cdots > \lmd_K$.
For convenience,  denote $\lmd_{\max} := \lmd_1$ and $\lmd_{\min} := \lmd_K$.

In this paper,  we study how to compute all real eigenvalues.
Mathematically,  this is equivalent to
finding all the real critical values of \reff{Bmodel},
which is a polynomial optimization problem.
The semidefinite relaxation method by Lasserre~\cite{lasserre2001global}
can be applied to get the largest or smallest eigenvalue.
To get other middle eigenvalues,  we need to use new techniques.
Recently,  Nie~\cite{nie2013hierarchy} proposed a method for computing
the hierarchy of local minimums in polynomial optimization,
which uses the Jacobian SDP relaxation method from~\cite{nie2013exact}.
We mainly follow the approach in~\cite{nie2013hierarchy}
to compute all real eigenvalues sequentially.
Indeed,  by this approach,  each real eigenvalue can be obtained
by solving a finite hierarchy of semidefinite relaxations.
This is an attractive property that most other numerical methods do not have.

The paper is organized as follows. In \S \ref{section-preliminaries},
we present some preliminaries in polynomial optimization.
In \S \ref{section_Beigen},  we propose semidefinite relaxations
for computing all real eigenvalues sequentially.
In \S \ref{section_numerical}, we report extensive numerical examples
to show how to compute all real eigenvalues.

\section{Preliminaries}\label{section-preliminaries}

This section reviews some basics in polynomial optimization.
We refer to~\cite{cox2007ideals, LasBok, Lau} for details.

Denote by $\mbR[x]:=\mbR[x_1,  \ldots,  x_n]$ the ring of polynomials
in $x:=(x_1, \ldots,  x_n)$  with real coefficients.
For a degree $d$,  $\mbR[x]_d$ denotes the space of all polynomials in $\mbR[x]$
whose degrees are at most $d$. The dimension of the space
$\mbR[x]_d$ is $\binom{n+d}{d}$.
An ideal of $\mbR[x]$ is a subset $J$ of $\mbR[x]$
such that $J \cdot \mbR[x] \subseteq J$ and $J+J \subseteq J$.
For a tuple $\phi:=(\phi_1, \ldots, \phi_r)$ of polynomials in $\mbR[x]$,
the ideal generated by $\phi$ is the smallest ideal containing all $\phi_i$,
which is the set $\phi_1\cdot \mbR[x]+\cdots+\phi_r\cdot \mbR[x]$
and is denoted by $I(\phi)$. The set
\[
I_k(\phi) := \phi_1\cdot \mbR[x]_{k-\deg(\phi_1)} + \cdots
+ \phi_r\cdot \mbR[x]_{k-\deg(\phi_r)}
\]
is called the $k$-th truncation of the ideal $I(\phi)$. Clearly,
\[
\bigcup_{k \in \N } \,  I_k(\phi) = I(\phi).
\]

A polynomial $\sigma \in \mbR[x]$ is called a sum of squares (SOS)
if there exist $p_1,  \ldots,  p_k \in \mbR[x]$ such that
$\sigma = p_1^2 + \cdots +p_k^2$. Let $\Sigma[x]$ be the set of all SOS polynomials and
\[
\Sigma[x]_m := \Sigma[x] \cap \mbR[x]_m.
\]
Both $\Sigma[x]$ and $\Sigma[x]_m$ are convex cones. As is well known,
each SOS polynomial is nonnegative everywhere,
while the reverse is not necessarily true. We refer to~\cite{Rez00}
for a survey on SOS and nonnegative polynomials.
Let $\psi:=(\psi_1,  \ldots,  \psi_t)$ be a tuple of polynomials in $\mbR[x]$. The set
\[
Q_N(\psi) := \Sigma[x]_{2N} + \psi_1\cdot \Sigma[x]_{2N-\deg(\psi_1)} + \cdots
+ \psi_t\cdot\Sigma[x]_{2N-\deg(\psi_t)}
\]
is called the $N$-th truncation of the quadratic module
generated by $\psi$. The union
\[
Q(\psi) := \bigcup_{N \in \N } \,  Q_N(\psi)
\]
is called the quadratic module generated by $\psi$.

Let $\N$ be the set of nonnegative integers.
For $x:=(x_1, \ldots,  x_n)$ and $\af: = (\af_1,  \ldots,  \af_n)$,  denote
$x^\af := x_1^{\af_1} \cdots x_n^{\af_n}$ and
$|\af| := \af_1 + \cdots + \af_n$. For $d\in \N$,  denote
\[
\N_d^n := \{ \af \in \N^n: |\af| \leq d\}.
\]
The space dual to $\mbR[x]_d$ is the set of all
{\it truncated multi-sequences} (tms') of degree $d$,
which is denoted by $\mbR^{\N_d^n}$. A vector $y$ in $\mbR^{\N_d^n}$
is indexed by $\af \in \N_d^n$,  i.e.,
\[
y = (y_\af)_{ \af \in \N_d^n}.
\]
Each $y \in \mbR^{\N_d^n}$ defines the linear functional $\mathscr{L}_y $
acting on $\mbR[x]_d$ as
\[
\mathscr{L}_y \left(  x^\af \right) =  y_\af, \quad \forall \,  \af \in \N_d^n.
\]

Let $q \in \mbR[x]_{2k}$. For each $y \in \mbR^{\N_{2k}^n}$,
the function $\mathscr{L}_y (q p^2)$
is a quadratic form in $vec(p)$,  the coefficient vector of
polynomial $p$ with $\deg(qp^2) \leq 2k$.
Let $L_q^{(k)}(y)$ be the symmetric matrix such that
\[
\mathscr{L}_y ( q p^2  ) =  vec(p)^{\text{T}} \Big( L_q^{(k)}(y) \Big) vec(p).
\]
The matrix $L_q^{(k)}(y)$ is called the $k$-th localizing matrix of $q$
generated by $y$. It is linear in $y$.
For instance,  when $n=2$,  $k=2$ and $q=1-x_1^2-x_2^2$,  we have
\[
L_{1-x_1^2-x_2^2}^{(2)}(y) =  \left(
\begin{array}{rcc}
y_{00}-y_{20}-y_{02} & y_{10}-y_{30}-y_{12} & y_{01}-y_{21}-y_{03} \\
y_{10}-y_{30}-y_{12} & y_{20}-y_{40}-y_{22} & y_{11}-y_{31}-y_{13} \\
y_{01}-y_{21}-y_{03} & y_{11}-y_{31}-y_{13} & y_{02}-y_{22}-y_{04} \\
\end{array}
\right).
\]
When $q=1$ (i.e.,  the constant one polynomial),
$L_1^{(k)}(y)$ is called the $k$-th moment matrix generated by $y$,
and is denoted as $M_k(y)$. For instance,  when $n=2$ and $k=2$,
 \[
 M_2(y)= \left(
\begin{array}{cccccc}
      y_{00} & y_{10} & y_{01} & y_{20} & y_{11} & y_{02} \\
      y_{10} & y_{20} & y_{11} & y_{30} & y_{21} & y_{12} \\
      y_{01} & y_{11} & y_{02} & y_{21} & y_{12} & y_{03} \\
      y_{20} & y_{30} & y_{21} & y_{40} & y_{31} & y_{22} \\
      y_{11} & y_{21} & y_{12} & y_{31} & y_{22} & y_{13} \\
      y_{02} & y_{12} & y_{03} & y_{22} & y_{13} & y_{04} \\
\end{array}
  \right).
\]

%
%
%

\section{Semidefinite relaxations for computing all real eigenvalues}
\label{section_Beigen}

In this section,  we show how to compute all real eigenvalues sequentially.
The Jacobian SDP relaxation technique in~\cite{nie2013exact}
is a useful tool for this purpose.

Let $\mtA \in \mathtt{S}^m(\mbR^n)$ and $\mtB \in \mathtt{S}^{m^\prm}(\mbR^n)$.
For convenience,  denote $f(x):= \mtA x^m$ and $g(x):= \mtB x^{m^\prm}-1$.
Then \reff{Bmodel} is the same as
\be \label{equ:maxf-st-g}
 \max \quad f(x) \quad \st \quad  g(x)=0.
\ee
%
%
In the introduction,  we have seen that $(\lmd,  u)$ is a $\mtB$-eigenpair
if and only if $\lmd$ is a critical value of \reff{equ:maxf-st-g},
and $u$ is an associated critical point.
The problem~\reff{equ:maxf-st-g} always has
finitely many critical values (cf.~\cite{nie2013hierarchy}),
including both complex and real ones.
So, $\mtA$ has finitely many complex and real eigenvalues.
We order the real eigenvalues monotonically as
\[
\lmd_1 > \lmd_2 > \cdots > \lmd_K,
\]
where $K$ is the total number of distinct real eigenvalues. Denote
\[
\mathcal{W} := \{x\in \mathbb{R}^n\ |\ \rank\, [\nabla f(x) \quad \nabla g(x) ]\leq 1\}.
\]
Clearly,  if $(\lmd,  u)$ is a real $\mtB$-eigenpair of $\mtA$,
then $u \in \mathcal{W}$.
The description of the set $\mathcal{W}$ does not use the Lagrange multiplier.
This is an advantage in computation.
Suppose $g(x)=0$ is a smooth real hypersurface
(i.e.,  $\nabla g(x) \ne 0$ for all real points on $g(x) = 0$).
It follows from   Definition 1.1 that any $u \in \mathcal{W}$ satisfying $g(u)=0$
is a $\mtB$-eigenvector of $\mtA$, associated to the eigenvalue $\lmd = f(u)$.
For the frequently used Z-eigenvalues (i.e.,  $g(x) = x^{\text{T}}x-1$)
and H-eigenvalues (i.e.,  $g(x) = x_1^m+\cdots+x_n^m-1$),
the hypersurface $g(x)=0$ is smooth.

A point $u$ belongs to $\mathcal{W}$ if and only if
\[
f_{x_i}(u)g_{x_j}(u)-f_{x_j}(u)g_{x_i}(u)=0 \,
\,  ( 1 \leq i < j \leq n ),
\]
where $f_{x_i}=\frac{\partial}{\partial x_i} f(x)$
and $g_{x_i}=\frac{\partial}{\partial x_i}g(x)$.
There are totally $\frac{1}{2} n(n-1)$ equations.
Indeed,  the number of defining equations for $\mathcal{W}$
can be dropped to $2n-3$ (cf.~\cite[Chap.~5]{Bruns}).
It suffices to use the following $2n-3$ equations (cf.~\cite{Bruns, nie2013exact}):
\be \label{def:hr}
h_r:=\sum_{i+j=r+2}{(f_{x_i}g_{x_j}-f_{x_j}g_{x_i}})=0 \, \,  (r=1, \cdots, 2n-3).
\ee
For convenience,  let $h_{2n-2} := g$ and
\be \label{def:h}
h := (h_1,  \ldots,  h_{2n-2} ).
\ee
Clearly, \reff{equ:maxf-st-g} is equivalent to the maximization problem
\be \label{align_general_new}
\max \quad f(x) \quad \st \quad  h_r(x)=0 \, \,  ( r = 1,  \ldots,  2n-2 ).
\ee
When the real hypersurface $g(x)=0$ is smooth,
a point $u$ is feasible for \reff{align_general_new}
 if and only if $u$ is a critical point of \reff{equ:maxf-st-g},
i.e.,  $u$ is a $\mtB$-eigenvector. This implies that
the objective value of \reff{align_general_new} at any feasible point
is a $\mtB$-eigenvalue of $\mtA$. Thus, 
the objective values on feasible points
are $\lmd_1,  \ldots,  \lmd_K$.

In the following,  we show how to compute all real eigenvalues sequentially.
That is,  we compute $\lmd_1$ first,  then $\lmd_2$ second,
and then $\lmd_3,  \ldots$ if they exist.

\subsection{The largest eigenvalue}

The largest eigenvalue $\lmd_1$ is the maximum value
of problem \reff{align_general_new}.
Write the polynomial $f(x)= \mathcal{A} x^m$ as
\[
f(x) = \sum_{\af \in \N^n: |\alpha| = m}  {f_{\alpha} x^{\alpha}}.
\]
For a tms $y \in \mbR^{\N_{2N}^n}$ with degree $ 2N \geq m$,  denote
\[
\langle f, y\rangle := \sum_{\af \in \N^n: |\alpha| = m}  {f_{\alpha}y_{\alpha}}.
\]
Clearly,  $\langle f, y\rangle$ is a linear function in $y$.
Denote
\[ N_0 := \lceil (m+m^\prm-2)/2\rceil. \]
Lasserre's hierarchy of semidefinite relaxations (cf.~\cite{lasserre2001global})
for solving \reff{align_general_new} is ($N=N_0, N_0+1, \ldots$)
\be \label{equ_hierarchy_p1}
\left\{ \baray{rl}
 \rho_N^{(1)}:=\max & \langle f, y\rangle  \\
  \st & L_{h_r}^{(N)}(y) =0\, \,  ( r=1, \cdots, 2n-2),  \\
   &  y_0 = 1, \,  M_N(y)\succeq 0.
\earay \right.
\ee
Let $h$ be the tuple as in \reff{def:h}.
The dual problem of \reff{equ_hierarchy_p1} is then
\be \label{largest_sos}
\eta_N^{(1)} := \min \quad \gamma \quad
\st\quad \gamma- f \in I_{2N}(h) + \Sigma[x]_{2N}.
\ee

It can be shown that the optimal values
$\rho_N^{(1)},  \eta_N^{(1)}$ are upper bounds for $\lmd_1$.
Both sequences $\{\rho_N^{(1)} \}$
and $\{ \eta_N^{(1)} \}$ are monotonically decreasing.
%
%
That is
\[
\rho_{N_0}^{(1)}  \geq \rho_{N_0+1}^{(1)} \geq \cdots \geq  \rho_N^{(1)} \geq \cdots  \geq \lmd_1,
\]
\[
\eta_{N_0}^{(1)}  \geq \eta_{N_0+1}^{(1)} \geq \cdots \geq  \eta_N^{(1)} \geq \cdots  \geq \lmd_1.
\]
By the weak duality,  we also have
\[
\rho_N^{(1)}  \leq  \eta_N^{(1)}  \quad  (N=N_0, N_0+1, \ldots).
\]
In fact,  they both have the nice property
of converging to $\lmd_1$ in finitely many steps,
i.e., $\rho_N^{(1)} = \eta_N^{(1)}=\lambda_1$ for all $N$ large enough.

\begin{theorem} \label{thm:lmd:1st}
Let $\mtA \in \mathtt{S}^m(\mbR^n)$ and $\mtB \in \mathtt{S}^{m^\prm}(\mbR^n)$.
Suppose the real hypersurface $\mathcal{B}x^{m^\prm} =1$ is smooth.
Let $\lambda_1$ be the largest real $\mtB$-eigenvalue of $\mtA$.
Then,  we have the following properties:
\begin{enumerate}[(i)]

\item It holds that
$\rho_N^{(1)} = \eta_N^{(1)}=\lambda_1$ for all $N$ large enough.

\item Suppose $\lmd_1$ has finitely many real eigenvectors on $\mathcal{B}x^{m^\prm} =1$.
If $N$ is large enough,
then,  for every optimizer $y^*$ of \reff{equ_hierarchy_p1},
there exists an integer $t \leq N$ such that
\be \label{cd:flat}
\rank\,  M_{t-N_0}(y^*) \,  = \,  \rank \,  M_{t}(y^*).
\ee
\end{enumerate}
\end{theorem}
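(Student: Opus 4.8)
The plan is to deduce both parts from the convergence theory of Jacobian semidefinite relaxations developed in~\cite{nie2013exact} (and reused in~\cite{nie2013hierarchy}); the work specific to this setting is checking that the hypotheses of that theory hold for the eigenvalue problem, plus the soft monotonicity/duality bookkeeping. First I would record the reductions already made before the theorem: because $\mtB x^{m^\prm}=1$ is a smooth real hypersurface, $\nabla g$ never vanishes on $\{g=0\}$, so $u$ is feasible for~\reff{align_general_new} if and only if $u$ is a critical point of~\reff{equ:maxf-st-g}, i.e.\ a real $\mtB$-eigenvector, with $f(u)$ the associated eigenvalue. Hence the feasible set $\mathcal{K}$ of~\reff{align_general_new} equals $\mathcal{W}\cap\{g=0\}$ and $f(\mathcal{K})=\{\lmd_1,\dots,\lmd_K\}$, so~\reff{align_general_new} attains its maximum, which equals $\lmd_1$; note $\mathcal{K}\neq\emptyset$ since $\lmd_1$ has a real eigenvector by hypothesis. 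Finally, as already observed in the text, $\{\rho_N^{(1)}\}$ and $\{\eta_N^{(1)}\}$ are non-increasing with $\lmd_1\le\rho_N^{(1)}\le\eta_N^{(1)}$.

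For part (i) it is enough to exhibit a single $N$ with $\eta_N^{(1)}=\lmd_1$: by monotonicity and $\rho_N^{(1)}\le\eta_N^{(1)}$ the equalities $\rho_N^{(1)}=\eta_N^{(1)}=\lmd_1$ then hold for all larger $N$. Taking $\gamma=\lmd_1$ in~\reff{largest_sos}, this amounts to producing a degree-bounded certificate $\lmd_1-f\in I_{2N}(h)+\Sigma[x]_{2N}$. This is precisely the exactness conclusion of the Jacobian SDP relaxation applied to~\reff{equ:maxf-st-g}: the tuple $h$ in~\reff{def:h} is (modulo the standard reduction from $\tfrac12 n(n-1)$ minors to $2n-3$, cf.~\cite{Bruns}) the Jacobian-augmented constraint system of~\reff{equ:maxf-st-g}, the real variety $\mathcal{K}=V_{\mbR}(h)$ is nonempty, and $f$ takes only finitely many critical values on it. Under exactly these hypotheses~\cite{nie2013exact} provides a finite $N$ with the above representation, giving $\eta_N^{(1)}=\lmd_1$. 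I expect the one genuinely hard input to be this representation: if one unwound it by hand, one would split $\mathcal{K}$ into the finitely many pieces on which $f$ is constant, note that $\prod_i(f-\lmd_i)$ vanishes on $\mathcal{K}$, and then need a bounded-degree real Positivstellensatz for the Jacobian ideal $I(h)$ — this is the nontrivial real-algebraic content of~\cite{nie2013exact}, and everything else in the proof is comparatively routine.

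For part (ii), the extra hypothesis says the maximizer set of~\reff{align_general_new} — equivalently, the set of real eigenvectors of $\mtA$ on $\{g=0\}$ attaining $\lmd_1$ — is finite and, by part (i), nonempty. By part (i), for all large $N$ the relaxation is exact with $\rho_N^{(1)}=\lmd_1$, so any optimizer $y^*$ of~\reff{equ_hierarchy_p1} satisfies the moment and localizing constraints with $\langle f,y^*\rangle=\lmd_1$. Zero-dimensionality of the optimizer set is exactly the situation in which the flat-truncation results for Jacobian SDP relaxations apply: for $N$ sufficiently large every such $y^*$ must satisfy the rank condition~\reff{cd:flat} for some $t\le N$, and that condition is a Curto--Fialkow flat-extension certificate showing $y^*$ is the moment sequence of a finitely atomic measure supported on the maximizers (from which those eigenvectors can be extracted). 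The obstacle here mirrors part (i): the delicate point is the guarantee that flat truncation eventually holds for \emph{every} optimizer $y^*$, not merely generic ones, which again rests on the zero-dimensionality of the optimizer set together with the exactness from part (i) and is supplied by~\cite{nie2013exact}.
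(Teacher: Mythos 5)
Your proposal is correct and takes essentially the same route as the paper: after checking that the smoothness hypothesis makes the feasible set of \reff{align_general_new} coincide with the set of real $\mtB$-eigenvectors (so $f$ has only finitely many values there, and the problem is nonempty), both you and the paper deduce (i) and (ii) by specializing the finite-convergence and flat-truncation results for Jacobian SDP relaxations from \cite{nie2013exact}/\cite{nie2013hierarchy}. The paper's proof is terser — it simply notes that \reff{def:hr}, \reff{align_general_new}, \reff{equ_hierarchy_p1}--\reff{largest_sos} are specializations of the corresponding constructions in \cite{nie2013hierarchy} and invokes Theorem~4.1 there — whereas you spell out the monotonicity/duality bookkeeping, the reduction of (i) to a single degree-bounded representation $\lmd_1-f\in I_{2N}(h)+\Sigma[x]_{2N}$, and the role of zero-dimensionality of the optimizer set in (ii), but the core imported input is identical and no gap is introduced.
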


\begin{proof}
Note that $-\lmd_1$ is the minimum value of
\[
\min \quad -f(x) \quad \st \quad g(x) = 0.
\]
The polynomials $h_1,  \ldots,  h_{2n-3}$ are constructed
by using Jacobian SDP relaxations in~\cite{nie2013exact}.
The relaxations \reff{def:hr}, \reff{align_general_new}, \reff{equ_hierarchy_p1}-\reff{largest_sos}
are specializations of the semidefinite relaxations (4.5), (4.6),  (4.7)-(4.8)
constructed in~\cite{nie2013hierarchy}.
Thus, the items (i)-(ii) can be implied by
Theorem 4.1 of~\cite{nie2013hierarchy}.
\end{proof}

In computation,  a practical issue is how to determine whether
$\rho_N^{(1)} = \eta_N^{(1)}=\lambda_1$ or not, because $\lmd_1$ is typically unknown.
This can be done by checking the rank condition \reff{cd:flat}.
If it is satisfied,  then we can get
\[
\ell:=\rank \,  M_{t}(y^*)
\]
distinct feasible points $u_1,  \ldots,  u_\ell$ of \reff{align_general_new},
such that each $u_i$ is a maximizer of \reff{align_general_new}
and $f(u_i) = \rho_{N}^{(1)} = \eta_{N}^{(1)} =\lmd_1$.
They can be computed by the method in Henrion and Lasserre~\cite{HenLas05}.
In other words,  if \reff{cd:flat} holds,  then
$\rho_{N}^{(1)} = \eta_{N}^{(1)} = \lmd_1$,  and such
$u_1,  \ldots,  u_\ell$ are the associated $\mtB$-eigenvectors.
So,  by solving  \reff{equ_hierarchy_p1}-\reff{largest_sos},  we not only
get the largest eigenvalue $\lmd_1$,  but also its $\mtB$-eigenvectors.
As shown in Theorem~3.1 (ii), if there are finitely many real $\mtB$-eigenvectors
(this is the general case, cf.~\cite{cartwright2011number}),
then \reff{cd:flat} must be satisfied.
So, \reff{cd:flat} is generally sufficient and necessary for checking
convergence of semidefinite relaxations \reff{equ_hierarchy_p1}-\reff{largest_sos}.
The rank condition \reff{cd:flat} is called flatness.
It is a very useful tool for solving truncated moment problems
(cf. Curto and Fialkow~\cite{CF05}).
The software {\tt GloptiPoly~3} (cf. ~\cite{GloPol3}) can be applied to solve
the semidefinite relaxations \reff{equ_hierarchy_p1}-\reff{largest_sos}.

In Theorem~\ref{thm:lmd:1st}, the relaxations \reff{equ_hierarchy_p1}-\reff{largest_sos}
are assumed to be solved exactly. However, in practice,
they are often solved approximately, due to round-off errors.
Suppose $\widetilde{\rho}_{N}^{(1)},\widetilde{\eta}_{N}^{(1)}$
are numerically computed optimal values of \reff{equ_hierarchy_p1}-\reff{largest_sos},
respectively. Then
$\widetilde{\rho}_{N}^{(1)} = \widetilde{\eta}_{N}^{(1)}= \lambda_1$
may not hold exactly, but they are approximately true. The errors depend
on the accuracy of solving \reff{equ_hierarchy_p1}-\reff{largest_sos}.
We refer to Chapter~7 of the book \cite{SDPbook}
for error analysis in semidefinite programming.
When approximately optimal solutions of \reff{equ_hierarchy_p1}-\reff{largest_sos}
are computed, the rank condition \reff{cd:flat} will be satisfied  approximately.
This issue was discussed in \cite[\S3]{Nie-ft}.

\begin{remark}\label{rmk:geomul}
Suppose the rank condition \reff{cd:flat} is satisfied.
If $\rank\,  M_N(y^*)$ is maximum among the set of all optimizers of \reff{equ_hierarchy_p1},
then we can get all maximizers of \reff{align_general_new}
(cf.~\cite[\S6.6]{Lau}).
In such case,  we can get all the $\mtB$-eigenvectors associated to $\lmd_1$.
Therefore,  when \reff{equ_hierarchy_p1}-\reff{largest_sos} are solved by
primal-dual interior point methods, typically we can get
all the $\mtB$-eigenvectors associated to $\lmd_1$ (cf.~\cite{nie2013hierarchy}).
However,  if there are infinitely many $\mtB$-eigenvectors lying on $\mtB x^{m^\prm} =1$,
\reff{cd:flat} is typically not satisfied.
\end{remark}

To check the condition \reff{cd:flat},
we need to evaluate the ranks of matrices $M_{t-N_0}(y^*)$ and $M_{t}(y^*)$.
In numerical computation, sometimes this would be a very difficult issue
because of round-off errors. The rank of a matrix equals to the number of its
positive singular values. In practice, we can evaluate the rank
as the number of singular values bigger than a tolerance (say, $10^{-6}$).
By this way, if there is a sufficiently small perturbation on a matrix,
its evaluated rank will not change.
We refer to the book \cite{Demmel} for evaluating matrix ranks numerically.

\subsection{The second and other largest eigenvalues}

Suppose the $k$-th largest eigenvalue
$\lambda_k$ of $\mathcal{A}$ is known. We want to compute the
$(k+1)$-th largest eigenvalue $\lambda_{k+1}$,  if it exists.
Let $\delta\in \mathbb{R}$ be such that
\be \label{equ_delta}
 0 < \delta < \lmd_k - \lmd_{k+1}.
\ee
Consider the optimization problem
\be \label{largest-r-Jacobian}
\left\{ \baray{rl}
\max &  f(x)  \\
  \st  & h_r(x)=0 \, \,  ( r=1, \ldots, 2n-2),     \\
       &  f(x) \le \lambda_k - \delta.
\earay \right.
\ee
When \reff{equ_delta} is satisfied,  the optimal value
of \reff{largest-r-Jacobian} is $\lmd_{k+1}$.
Lasserre's hierarchy of semidefinite relaxations for solving
\reff{largest-r-Jacobian} is ($N=N_0, N_0+1, \ldots$)
\be \label{equ_hierarchy_pk}
\left\{ \baray{rl}
 \rho_N^{(k+1)} := \max & \langle f, y\rangle \\
\st  & L_{h_r}^{(N)}(y) =0 \,  (r=1, \ldots, 2n-2),  \\
   &  y_0 =1, \,  L_{\lambda_k-\delta-f}^{(N)}(y)\succeq 0, \,  M_N(y)\succeq 0.
\earay \right.
\ee
Its dual problem is then
\be   \label{largest_r_sos}
\eta_N^{(k+1)} := \min \quad \gamma \quad
\st \quad \gamma- f \in I_{2N}(h) + Q_N(\lambda_k-\delta-f).
\ee

Semidefinite relaxations \reff{equ_hierarchy_pk}-\reff{largest_r_sos}
have the following properties:

\begin{theorem} \label{thm:lmd:k+1}
Suppose the real hypersurface $\mtB x^{m^\prm}=1$ is smooth.
Let $\lmd_k$ (resp.,  $\lmd_{k+1}$) be the $k$-th (resp.,  $(k+1)$-th )
largest $\mathcal{B}$-eigenvalue of $\mtA$.
For all $\delta$ satisfying \reff{equ_delta},
we have the following properties:
\begin{enumerate}[(i)]
%
%
\item For all $N$ big enough,  we have
$\rho_N^{(k+1)} = \eta_N^{(k+1)}=\lambda_{k+1}$.

\item Suppose $\lmd_{k+1}$  has finitely many eigenvectors on
$\mathcal{B}x^{m^\prm} =1$. If $N$ is large enough,  then
for every optimizer $y^*$ of \reff{equ_hierarchy_pk},
there exists an integer $t \leq N$ such that \reff{cd:flat} holds.

\end{enumerate}
\end{theorem}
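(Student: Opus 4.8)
The plan is to deduce both parts from the general results on Jacobian semidefinite relaxations in \cite{nie2013hierarchy}, exactly as was done for Theorem~\ref{thm:lmd:1st}. First I would verify that, under \reff{equ_delta}, the optimal value of \reff{largest-r-Jacobian} is indeed $\lmd_{k+1}$. Since the real hypersurface $\mtB x^{m^\prm}=1$ is smooth, a point $u$ is feasible for \reff{largest-r-Jacobian} if and only if $g(u)=0$, $u\in\mathcal{W}$, and $f(u)\le\lmd_k-\dt$; equivalently, $u$ is a $\mtB$-eigenvector whose eigenvalue $f(u)$ does not exceed $\lmd_k-\dt$. Because $\lmd_{k+1}<\lmd_k-\dt<\lmd_k$ and the real eigenvalues are precisely $\lmd_1>\cdots>\lmd_K$, the feasible eigenvalues are exactly $\lmd_{k+1},\dots,\lmd_K$, and the largest of them, $\lmd_{k+1}$, is attained; equivalently, $-\lmd_{k+1}$ is the minimum value of $-f(x)$ subject to $g(x)=0$ and $f(x)\le\lmd_k-\dt$. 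Note in particular that the inequality constraint $f(x)\le\lmd_k-\dt$ is never active at a feasible point, since $\lmd_k-\dt$ is not an eigenvalue; it only serves to discard the larger eigenvalues $\lmd_1,\dots,\lmd_k$.

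Next I would identify \reff{def:hr}, \reff{largest-r-Jacobian}, \reff{equ_hierarchy_pk}-\reff{largest_r_sos} as specializations of the semidefinite relaxations in \cite{nie2013hierarchy}: the polynomials $h_1,\dots,h_{2n-3}$ are constructed by the Jacobian SDP technique of \cite{nie2013exact} applied to the objective $f$ and the single equality constraint $g$, $h_{2n-2}=g$ is that equality constraint itself, and the inequality $f(x)\le\lmd_k-\dt$ is the ``cut-off'' constraint that \cite{nie2013hierarchy} adjoins in order to pass from one critical value to the next. The hypotheses required by the general theorem are met: smoothness of $\{g=0\}$ supplies the nonsingularity condition on the constraining variety that \cite{nie2013exact, nie2013hierarchy} demand; problem \reff{equ:maxf-st-g} has finitely many complex critical values; and for part~(ii) the extra assumption that $\lmd_{k+1}$ has finitely many eigenvectors on $\mtB x^{m^\prm}=1$ is precisely the finiteness-of-minimizers hypothesis needed for the flat-truncation statement. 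Once this dictionary is in place, part~(i) (finite convergence $\rho_N^{(k+1)}=\eta_N^{(k+1)}=\lmd_{k+1}$) and part~(ii) (existence of $t\le N$ for which \reff{cd:flat} holds) follow from Theorem~4.1 of \cite{nie2013hierarchy}, with $N$ allowed to depend on $\dt$; since $\dt$ is fixed throughout, no uniformity in $\dt$ is needed.

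I expect the main point to check is that adjoining the cut-off inequality $f(x)\le\lmd_k-\dt$ does not interfere with the Jacobian construction carried out for $g$ alone. This is exactly because that inequality is inactive at every feasible point of \reff{largest-r-Jacobian}, so the critical-point system there coincides with that of $\max f$ subject to $g=0$ restricted to the region $\{f\le\lmd_k-\dt\}$; hence the Jacobian relaxation of \cite{nie2013exact} built from $f$ and $g$ remains exact on the restricted feasible set, while the additional localizing constraint $L_{\lmd_k-\dt-f}^{(N)}(y)\succeq 0$ (resp., the term $Q_N(\lmd_k-\dt-f)$ on the dual side) is handled by the standard Lasserre machinery. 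I would also note that no compactness of $\{g=0\}$ is required, as the Jacobian SDP relaxation of \cite{nie2013exact} does not assume it; the smoothness hypothesis alone suffices. Assembling these observations yields the theorem.
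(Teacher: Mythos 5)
Your proposal is correct and takes essentially the same route as the paper: both reduce the theorem to the general finite-convergence result for the cut-off Jacobian relaxation in Nie's hierarchy-of-local-minima paper, with your verification that \reff{largest-r-Jacobian} has optimal value $\lmd_{k+1}$ and that the cut-off inequality is inactive supplying detail the paper leaves implicit. The only slip is the citation: the paper invokes Theorem~4.3 of \cite{nie2013hierarchy} (the result for the $(k+1)$-th smallest critical value with a cut-off constraint), not Theorem~4.1 (which is the base case used for $\lmd_1$ in Theorem~\ref{thm:lmd:1st}).
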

\begin{proof}
%
%
Note that $-\lmd_k$ is the $k$-th smallest critical value of
\[
\min \quad -f(x) \quad  \text{s.t.} \quad g(x) = 0.
\]
The polynomials $h_1, \ldots,  h_{2n-3}$ are constructed by using
Jacobian SDP relaxations in~\cite{nie2013exact}.
The semidefinite relaxations \reff{largest-r-Jacobian}-\reff{largest_r_sos}
are specializations of (4.9)-(4.11) in~\cite{nie2013hierarchy}.
Thus,  the items (i)-(ii) can be obtained by Theorem 4.3 of~\cite{nie2013hierarchy}.
\end{proof}

\begin{remark} \label{nb:egv:lmdk}
The finite convergence of $\rho_N^{(k+1)}$ and $\eta_N^{(k+1)}$ to $\lmd_{k+1}$
can be identified by checking the rank condition \reff{cd:flat}.
If it is satisfied,  we can get $\ell$
$\mtB$-eigenvectors associated to $\lmd_{k+1}$.
When the semidefinite relaxations \reff{equ_hierarchy_pk} and \reff{largest_r_sos}
are solved by primal-dual interior point methods,
typically we can get all $\mtB$-eigenvectors,
provided there are finitely many ones.
%
%
The rank condition \reff{cd:flat} is generally sufficient
and necessary for checking the finite convergence
of the sequences $\{\rho_{N}^{(k+1)}\}$ and $\{\eta_{N}^{(k+1)}\}$.
We refer to Remark~\ref{rmk:geomul}.
We also refer to the discussions before and after Remark~\ref{rmk:geomul},
about the numerical issues related to \reff{cd:flat},
\reff{equ_hierarchy_pk} and \reff{largest_r_sos}.
\end{remark}

In practice,  we usually do not know whether $\lmd_{k+1}$ exists or not.
Even if it exists,  we do not know how small $\dt$ should be
chosen to satisfy \reff{equ_delta}. Interestingly,
this issue can be fixed by solving the optimization problem
\be  \label{opt:chi:k}
\left\{ \baray{rl}
\chi_k := \min & f(x) \\
  \st &  h_r(x)=0 \,  ( r=1, \ldots,  2n-2),   \\
& f(x) \ge \lambda_k - \delta.
\earay \right.
\ee
The following proposition is useful.

\begin{proposition} \label{prop:3.5}
Suppose the real hypersurface $\mtB x^{m^\prm}=1$ is smooth.
Let $\lmd_k$ (resp.,  $\lmd_{min}$) be the $k$-th largest (resp.,  smallest)
$\mathcal{B}$-eigenvalue of $\mtA$.
For all $\delta >0$,  we have the following properties:
\begin{enumerate}[(i)]

\item The relaxation \reff{equ_hierarchy_pk} is infeasible for some $N$
if and only if $\lmd_k - \dt < \lmd_{min}$.

\item If $\chi_k = \lmd_k$ and $\lmd_{k+1}$ exists,
then $\lmd_{k+1} < \lmd_k - \dt$,  i.e.,  \reff{equ_delta} holds.

\item If $\chi_k = \lmd_k$ and \reff{equ_hierarchy_pk} is infeasible for some $N$,
then $\lmd_k = \lmd_{min}$ and $\lmd_{k+1}$ does not exist.

\end{enumerate}
\end{proposition}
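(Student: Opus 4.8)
The plan is to reduce both \reff{largest-r-Jacobian} and \reff{opt:chi:k} to statements about the finite list of real eigenvalues $\lmd_1>\cdots>\lmd_K$, and then read off (i)--(iii). First I would record the basic dictionary: since $\mtB x^{m^\prm}=1$ is smooth, a point $u$ satisfies $h_r(u)=0$ for all $r=1,\ldots,2n-2$ exactly when $u$ is a $\mtB$-eigenvector, with $f(u)$ the associated eigenvalue (this is the observation already used for \reff{align_general_new}). Hence the feasible set of \reff{opt:chi:k} is the union of the eigenvector sets of those $\lmd_j$ with $\lmd_j\ge\lmd_k-\dt$; it is nonempty since $\dt>0$ forces $\lmd_k\ge\lmd_k-\dt$, and $f$ takes finitely many values on it, so $\chi_k$ is attained and
\[
\chi_k \;=\; \min\{\lmd_j : \lmd_j\ge\lmd_k-\dt\}.
\]
Likewise \reff{largest-r-Jacobian} is feasible iff some eigenvalue is $\le\lmd_k-\dt$, i.e.\ iff $\lmd_{\min}\le\lmd_k-\dt$.

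For (i), the easy half is: if $\lmd_k-\dt\ge\lmd_{\min}$, I would take a real eigenvector $u$ of $\lmd_{\min}$ and let $y^*$ be the degree-$2N$ truncated moment sequence of the Dirac measure at $u$ (so $y^*_\af=u^\af$); then $y^*_0=1$, $M_N(y^*)\succeq0$, $L_{h_r}^{(N)}(y^*)=0$ because $h_r(u)=0$, and $L_{\lmd_k-\dt-f}^{(N)}(y^*)\succeq0$ because $\lmd_k-\dt-f(u)=\lmd_k-\dt-\lmd_{\min}\ge0$, so $y^*$ is feasible for \reff{equ_hierarchy_pk} at every $N$; the contrapositive gives that infeasibility of \reff{equ_hierarchy_pk} for some $N$ implies $\lmd_k-\dt<\lmd_{\min}$. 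For the converse, if $\lmd_k-\dt<\lmd_{\min}$ then, by the dictionary above, \reff{largest-r-Jacobian} has empty feasible set; since $h_1,\ldots,h_{2n-3}$ are the Jacobian polynomials of~\cite{nie2013exact} and $g$ cuts out a smooth hypersurface, \reff{largest-r-Jacobian} falls within the Jacobian SDP framework of~\cite{nie2013exact,nie2013hierarchy}, and I would cite the infeasibility-detection property there to conclude that \reff{equ_hierarchy_pk} is infeasible for some finite $N$.

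Items (ii) and (iii) then drop out of the displayed formula for $\chi_k$. For (ii), assuming $\chi_k=\lmd_k$ and that $\lmd_{k+1}$ exists, if $\lmd_{k+1}\ge\lmd_k-\dt$ then $\lmd_{k+1}$ is one of the eigenvalues over which the minimum defining $\chi_k$ is taken, whence $\chi_k\le\lmd_{k+1}<\lmd_k=\chi_k$, a contradiction; so $\lmd_{k+1}<\lmd_k-\dt$, which is exactly \reff{equ_delta}. For (iii), assuming $\chi_k=\lmd_k$ and that \reff{equ_hierarchy_pk} is infeasible for some $N$, item (i) gives $\lmd_k-\dt<\lmd_{\min}$, so every eigenvalue is $\ge\lmd_{\min}>\lmd_k-\dt$, the minimum in the formula for $\chi_k$ runs over all of $\lmd_1,\ldots,\lmd_K$, hence $\chi_k=\lmd_{\min}$; together with $\chi_k=\lmd_k$ this forces $\lmd_k=\lmd_{\min}$, so $\lmd_k$ is the smallest real eigenvalue and $\lmd_{k+1}$ does not exist.

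The only non-bookkeeping step, and the expected main obstacle, is the converse direction of (i): turning emptiness of the real feasible set of \reff{largest-r-Jacobian} into infeasibility of a finite-level semidefinite relaxation. I expect to handle this exactly as in the proofs of Theorems~\ref{thm:lmd:1st} and~\ref{thm:lmd:k+1}, by verifying that the smoothness of $\mtB x^{m^\prm}=1$ puts \reff{largest-r-Jacobian} into the scope of the Jacobian SDP relaxation results of~\cite{nie2013exact,nie2013hierarchy}, which supply a finite-level certificate of emptiness.
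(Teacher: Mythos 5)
Your proposal is correct and follows essentially the same route as the paper: reduce everything to the observation that $\chi_k$ is the smallest eigenvalue greater than or equal to $\lmd_k-\dt$, handle the easy half of (i) by a Dirac-measure feasibility certificate and defer the hard half (a finite-level infeasibility certificate when the real feasible set is empty) to the Jacobian SDP results of~\cite{nie2013exact,nie2013hierarchy}, exactly as the paper does by citing Theorem~4.3(i) there. The only cosmetic difference is in (iii), where the paper argues by contradiction via (ii) while you read the conclusion off the closed form $\chi_k=\min\{\lmd_j:\lmd_j\ge\lmd_k-\dt\}$; the two are logically equivalent.
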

\begin{proof}
(i) This can be implied by Theorem 4.3 (i) of~\cite{nie2013hierarchy}.

(ii) Clearly,  $\chi_k$ is the smallest $\mtB$-eigenvalue
greater than or equal to $\lmd_k-\dt$.
If $\lmd_{k+1}$ exists and $\chi_k = \lmd_k$,  we must have
$\lmd_{k+1} < \lmd_k - \dt$.

(iii) From (i),  we know $\lmd_k - \dt < \lmd_{min}$.
If otherwise  $\lmd_{min} < \lmd_k$,  then $\lmd_{k+1}$ exists
and $\lmd_{k+1} < \lmd_k - \dt$ by (ii).
This results in the contradiction $\lmd_{k+1} < \lmd_{min}$.
So,  $\lmd_{min} = \lmd_k$.
\end{proof}

%
%
The problem \reff{opt:chi:k} is also a polynomial optimization problem.
Similar semidefinite relaxations like \reff{equ_hierarchy_pk}-\reff{largest_r_sos}
can be constructed to solve it. The hierarchy of such relaxations
can also be shown to have finite convergence (cf.~\cite{nie2013hierarchy}),
by similar arguments.
Thus, the optimal value $\chi_k$ of \reff{opt:chi:k} can be computed
by solving its semidefinite relaxations.
For $\dt>0$ sufficiently small, we must have
$\chi_k = \lmd_k$, no matter if $\lmd_{k+1}$ exists or not.
This is because $\chi_k$ is the smallest $\mtB$-eigenvalue
greater than or equal to $\lmd_k-\dt$.

The existence of $\lmd_{k+1}$ and the relation \reff{equ_delta}
can be checked as follows. First, we choose a small value
(say, 0.05) for $\dt$, and then solve \reff{opt:chi:k}.
If $\chi_k < \lmd_k$, we decrease the value $\dt$ as $\dt := \dt/5$
and solve \reff{opt:chi:k} again. Repeat this process
until we get $\chi_k = \lmd_k$.
(This process must stop when $\dt>0$ is sufficiently small.)
After $\chi_k = \lmd_k$ is reached,
there are only two possibilities:
1) If $\lmd_{k+1}$ does not exist, then $\lmd_k = \lmd_{min}$.
By Proposition~\ref{prop:3.5}(i), the relaxation \reff{equ_hierarchy_pk}
must be infeasible for some $N$. This then confirms the nonexistence
of $\lmd_{k+1}$, by Proposition~\ref{prop:3.5}(iii).
2) If $\lmd_{k+1}$ exists, then $\lmd_{k+1} < \lmd_k - \dt$,
by Proposition~\ref{prop:3.5}(ii). So, \reff{equ_delta} is satisfied.
Then, by Theorem~\ref{thm:lmd:k+1}(i),
we have $\rho_N^{(k+1)}=\lmd_{k+1}$ for $N$ sufficiently large.
In summary, if $\lmd_{k+1}$ does not exist,
we can get a certificate for that; if it exists,
we can get $\lmd_{k+1}$ by solving the relaxation \reff{equ_hierarchy_pk}.

We would like to point out that some variations of eigenvalue problems
can also be solved by using similar semidefinite relaxations.
%
%
The largest real eigenvalue in an interval $[a, b]$
is the optimal value of the problem
\be
\left\{ \baray{rl}
 \max & f(x)  \\
  \st &  h_r(x)=0 \,  (r=1, \ldots, 2n-2),   \\
 &  a \leq f(x) \leq b.
\earay \right.
\ee
If, in advance, we know there exists an eigenvector $u$ for $\lmd_{k+1}$
lying in some region, say, defined by some polynomial inequalities
$p_1(x) \geq 0, \ldots, p_s(x) \geq 0$,
then we can get such $u$ by solving the optimization problem
\be \label{eig:p(x)>=0}
\left\{ \baray{rl}
 \max & f(x)   \\
  \st & h_r(x)=0 \,  ( r=1, \ldots, 2n-2),   \\
      & f(x) \leq \lmd_k - \dt, \\
      & p_1(x) \geq 0, \, \ldots, \, p_s(x) \geq 0.
\earay \right.
\ee
Similar semidefinite relaxations like \reff{equ_hierarchy_pk}-\reff{largest_r_sos}
can be constructed to solve such polynomial optimization problems,
and we can get the desired eigenpairs.

\subsection{Getting all real eigenpairs}

We can compute all  real $\mtB$-eigenvalues sequentially as follows.
First,  we compute the largest one $\lmd_1$
by solving the hierarchy of semidefinite relaxations
\reff{equ_hierarchy_p1}-\reff{largest_sos}.
As shown in Theorem~\ref{thm:lmd:1st},  this hierarchy converges in finitely many steps.
After getting $\lmd_1$,  we solve the hierarchy of
\reff{equ_hierarchy_pk}-\reff{largest_r_sos} for $k=1$.
If $\chi_1 = \lmd_1$ and \reff{equ_hierarchy_pk} is infeasible
for some $N$,  then $\lmd_1$ is the smallest eigenvalue.
If $\chi_1 = \lmd_1$ and \reff{equ_hierarchy_pk} is feasible for all $N$,
then $\rho_N^{(2)}= \lmd_2$   for $N$ big enough.
Repeating this procedure,  we can get $\lmd_3,  \lmd_4,  \ldots$
if they exist,  or we get the smallest eigenvalue and stop.

As above,  we get the following algorithm.

\begin{algorithm} \label{alg:alleig}
Computing all   real $\mtB$-eigenpairs of a symmetric tensor $\mtA$.
\begin{description}
\item[Step 0] Choose a small positive value $\delta_0$ (e.g.,  $0.05$).
Let $k=1$.

\item [Step 1] Solve the hierarchy of \reff{equ_hierarchy_p1}
and get the largest eigenvalue $\lambda_1$.

\item [Step 2] Let $\delta = \delta_0$ and
solve the optimal value $\chi_k$ of \reff{opt:chi:k}.
If $\chi_k = \lmd_k$, then go to Step~3;
If $\chi_k < \lmd_k$,
let $\dt := \min\,( \dt/5,  \lmd_k - \chi_k )$, and compute $\chi_k$.
Repeat this process until $\chi_k = \lmd_k$.

\item [Step 3] Solve the hierarchy of \reff{equ_hierarchy_pk}.
If \reff{equ_hierarchy_pk} is infeasible for some order $N$,
then $\lmd_k$ is the smallest eigenvalue and stop.
Otherwise,  we can get the next largest eigenvalue $\lmd_{k+1}$.

\item [Step 4] Let $k:=k+1$ and go to Step~2.

\end{description}
\end{algorithm}

%
%
In Step 2,  if $\chi_k < \lambda_k$,
we should expect $\delta < \lmd_k - \chi_k$.
This is why we update $\dt$ as the minimum of $\dt/5$ and $\lmd_k - \chi_k$.
%
%

\section{Numerical experiments}
\label{section_numerical}

In this section,  we report numerical experiments for showing how to
compute all real eigenvalues. The computation is implemented in a
Thinkpad W520 Laptop,  with an Intel$^\circledR$ dual core CPU at 2.20GHz $\times$ 2
and 8GB of  RAM,  in a Windows~7 operating system.
We use the software Matlab 2013a and {\tt GloptiPoly~3}~\cite{GloPol3}
to solve the semidefinite relaxations for polynomial optimization problems.
In the display of numerical results,  we only show four decimal digits.
%
%

By the definition of $\mtB$-eigenvalues as in \reff{Beig},
$(\lambda,  u)$ is an eigenpair if and only if
$((-1)^{m-m^\prm}\lambda,  -u)$ is an eigenpair.
For H-eigenvalues ($m=m^\prm$),  the H-eigenvectors always
appear in $\pm$ pairs; so we only list H-eigenvectors $u$
satisfying $\Sigma_i {u_i}\ge 0$.
For Z-eigenvalues ($m^\prm=2$),  when $m$ is even,
the Z-eigenvectors appear in $\pm$ pairs,
and  we only list those $u$ satisfying $\Sigma_i {u_i}\ge 0$;
when $m$ is odd, $(\lmd,u)$ is a Z-eigenpair if and only if
$(-\lmd, -u)$ is a Z-eigenpair, and   they appear in $\pm$ pairs.

If the rank condition \reff{cd:flat} is satisfied,
then we can get the $\mtB$-eigenvalue $\lmd_k$ and
$\ell:= \rank\,  M_t(y^*)$ $\mtB$-eigenvectors associated to $\lmd_k$.
When primal-dual interior point methods are applied to solve
the semidefinite relaxations and \reff{cd:flat} holds,
generally all $\mtB$-eigenvectors associated to $\lmd_k$ can be obtained.
We refer to Remarks~\ref{rmk:geomul} and \ref{nb:egv:lmdk}.
In our numerical experiments,  the SDP solver {\tt SeDuMi}~\cite{sedumi}
is called by the software {\tt GloptiPoly~3}. The solver
{\tt SeDuMi} is based on primal-dual interior point methods.
So,  when the rank condition \reff{cd:flat} is satisfied,
we typically get all $\mtB$-eigenvectors of $\lmd_k$.
In such cases, the real geometric multiplicities of computed eigenvalues are also known.
In the display of our numerical results,  we use the notation $\lmd^{(\ell)}$
to mean that $\ell$ distinct $\mtB$-eigenvectors (modulo scaling)
are found for the eigenvalue $\lmd$.

When $\lmd_k$ has infinitely many $\mtB$-eigenvectors on $\mtB x^{m^\prm}=1$,
the rank condition \reff{cd:flat} is typically not satisfied.
%
%
To the best of the authors' knowledge, for such cases,
it is a theoretically open question to check
the convergence of \reff{equ_hierarchy_p1}-\reff{largest_sos}
and \reff{equ_hierarchy_pk}-\reff{largest_r_sos},
although they are proved to have finite convergence
in Theorems~\ref{thm:lmd:1st} and \ref{thm:lmd:k+1}. However,
in practice, this issue can be fixed heuristically as follows.
The sequence $\{ \rho_N^{(k)} \}$
always has finite convergence to $\lmd_k$.
After an approximate convergence of $\rho_N^{(k)}$ is observed,
we can use such $\rho_N^{(k)}$ as an approximation of $\lmd_k$.
Let $\eps>0$ be small such that $\lmd_k$ is a unique $\mtB$-eigenvalue of $\mtA$
in the interval $[\lmd_k-\eps,  \lmd_k+\eps]$.
Choose a generic vector $c \in \mbR^n$ and then solve the problem
\be \label{rand:egvector}
\left\{ \baray{rl}
 \min &  c^{\text{T}}x   \\
  \st &  h_r(x)=0 \,  (r=1, \ldots, 2n-2),   \\
 &  \lmd_k -\eps  \leq \mtA x^m \leq  \lmd_k + \eps.
\earay \right.
\ee
When $c$ is generic, \reff{rand:egvector} has a unique minimizer,
which is a $\mtB$-eigenvector associated to $\lmd_k$.
We can construct semidefinite relaxations, like \reff{equ_hierarchy_pk}-\reff{largest_r_sos},
for solving \reff{rand:egvector}. A $\mtB$-eigenvector
can be found by solving the semidefinite relaxations
(cf.~\cite[\S3]{Nie-ft}).
In practice, a generic $c$
can be chosen as a random vector in $\mbR^n$. In Matlab,
we can set {\tt c = randn(n,1)}.
A small enough $\eps$ can be chosen as follows.
We first assign a small value to $\eps$, say, $0.05$.
After solving \reff{rand:egvector}, we are done
if a $\mtB$-eigenvector associated to $\lmd_k$ is found;
otherwise, update $\eps := \eps/5$ and solve \reff{rand:egvector} again.
Repeat this process, until a $\mtB$-eigenvector $u$ associated
to $\lmd_k$ is found. Once $u$ is obtained,
we check the equation $\mathcal{A} u^{m-1} = \lmd_k \mathcal{B}u^{m^\prime-1}$.
If it is satisfied, then $(\lmd_k,u)$ is confirmed
to be an eigenpair.
In our examples,
we use the superscript $^{(\star)}$ to mean that an
eigenvector is computed by solving \reff{rand:egvector}.
%
%

\begin{example}$($\cite{Qi05}$)$
\label{example4-1}
Consider the tensor $\mtA \in \mathtt{S}^4(\mbR^3)$ such that
\[  \mtA x^4 = x_1^4+2x_2^4+3x_3^4. \]
It is a diagonal tensor (i.e.,  its entries $\mtA_{i_1 i_2 i_3}$ are all zeros except for
$i_1=i_2=i_3$). Its Z-eigenvalues were computed by Qi~\cite[Proposition~9]{Qi05}.
For this tensor,  the optimization problem \reff{align_general_new} is
\begin{align*}
  \max \quad & x_1^4+2x_2^4+3x_3^4 \\
  \st \quad & 2x_1x_2^3-x_2x_1^3=0,  \ 3x_1x_3^3-x_3x_1^3=0,  \\
             & 3x_2x_3^3-2x_3x_2^3=0,  \ x_1^2+x_2^2+x_3^2=1.
\end{align*}
Using Algorithm~\ref{alg:alleig},  we get all the real Z-eigenvalues
and Z-eigenvectors,  which are shown in Table~\ref{table_4-1}.
The computation takes about $9$ seconds.
 \begin{table}[h]
  \centering \small{
   \caption{Z-eigenpairs of the tensor in Example~\ref{example4-1}}
  \begin{tabular}{|c|r@{\hspace{1mm}}|r@{\hspace{1mm}}|@{\hspace{1mm}}r@{\hspace{1mm}}|r@{\hspace{1mm}}|@{\hspace{1mm}}r@{\hspace{1mm}}|@{\hspace{1mm}}r@{\hspace{1mm}}|r@{\hspace{1mm}}|}
   \hline
  $k$ &$\,\,\,\,1$\quad\, &$\,\,\,\,2$\quad\, &$3$\quad\,  &$\,\,\,\,4$\quad\,  &$5$\quad\,  &$\,\,\,\,6$ \quad\,  &$7$\quad\quad \\ \hline
 $\lambda_k$  & $3.0000$   &  $2.0000$  & \ $1.2000^{(2)}$ &   $1.0000$  &  \,\,$0.7500^{(2)}$ &  \, $0.6667^{(2)}$
          & $0.5455^{(4)}$  \\ \hline
    & $0.0000$   &  $0.0000$   &   $0.0000$    &  $1.0000$  &  $0.8660$ &  $0.8165$
    & $\pm 0.7386$      \\
 $u_k$ & $0.0000$  &  $1.0000$   &   $0.7746$  & $0.0000$  &  $0.0000$  & $\pm 0.5773$
               & $\pm 0.5222$      \\
    & $1.0000$  &  $0.0000$  & $\pm 0.6324$   &  $0.0000$  & $\pm 0.5000$
          &  $0.0001$  &  $0.4264$         \\ \hline
  \end{tabular}\label{table_4-1}}
\end{table}
\end{example}

\begin{example}\label{example4-2}
For the diagonal tensor $\mathcal{D}\in \mathtt{S}^5(\mbR^4)$ such that
$\mathcal{D} x^5 = x_1^5 + 2x_2^5-3x_3^5-4x_4^5$,
its orthogonal transformations have the same Z-eigenvalues as
$\mathcal{D}$ (cf.~Qi~\cite[Theorem~7]{Qi05}).
Consider $\mathcal{A}\in \mathtt{S}^5(\mbR^4)$ such that
$\mtA x^5 = \mathcal{D} (Px)^5$, where
\[
P = (I - 2w_1w_1^T)(I - 2w_2w_2^T)(I-2w_3w_3^T)
\]
and $w_1, w_2, w_3$ are randomly generated unit vectors.
%
%
The order $m=5$ is odd,  so the Z-eigenvalues of $\mtA$ appear in $\pm$ pairs. Using Algorithm~\ref{alg:alleig},
we get all 30 real Z-eigenvalues. It takes about $400$ seconds. The nonnegative Z-eigenvalues are
\begin{align*}
4.0000 , \quad  3.0000  , \quad   2.0000  , \quad   1.2163, \quad 1.0000, \quad0.9611,  \quad  0.8543, \quad 0.6057, \\
0.5550, \quad 0.5402, \quad 0.4805, \quad 0.3887, \quad0.3466,   \quad  0.3261,   \quad   0.2518.\quad \quad \quad \quad\,
\end{align*}
For cleanness,  the Z-eigenvectors are not shown.
\end{example}

\begin{example}\label{example4-3} $($\cite[Example~3]{Qi05}$)$
Consider the tensor $\mtA \in \mathtt{S}^4(\mbR^3)$ such that
\[
\mtA x^4 = 2x_1^4+3x_2^4+5x_3^4+4ax_1^2x_2x_3,
\]
where $a$ is a parameter.
The polynomial optimization problem \reff{align_general_new} is
\begin{align*}
  \max \quad & 2x_1^4+3x_2^4+5x_3^4+4ax_1^2x_2x_3 \nn \\
  \st \quad & x_1^{p-1}(3x_2^3+ax_1^2x_3)-x_2^{p-1}(2x_1^3+2ax_1x_2x_3)=0, \nn \\
             & x_1^{p-1}(5x_3^3+ax_1^2x_2)-x_3^{p-1}(2x_1^3+2ax_1x_2x_3)=0, \nn \\
             & x_2^{p-1}(5x_3^3+ax_1^2x_2)-x_3^{p-1}(3x_2^3+ax_1^2x_3)=0, \nn \\
             & x_1^p+x_2^p+x_3^p=1,
\end{align*}
where $p=2$ for Z-eigenvalues and $p=4$ for H-eigenvalues. Using Algorithm~\ref{alg:alleig},
we get all the real Z and H eigenvalues,  which are shown in Table~\ref{table4-3}.
For each value of $a$, it takes a couple of seconds (from $5$ to $20$).
%
%
\begin{table}[h]
\centering
\caption{Z-eigenvalues and H-eigenvalues of the tensor in Example~\ref{example4-3}} \label{table:example1}
\small{
\begin{tabular}{|l|l@{\hspace{2mm}}l@{\hspace{2mm}}l@{\hspace{2mm}}l@{\hspace{2mm}}r@{\hspace{2mm}}
r@{\hspace{2mm}}r@{\hspace{2mm}}r@{\hspace{2mm}}|}
\hline
&\multicolumn{8}{c|}{ \rm Z-eigenvalues }\\ \hline
$a=0$ &  $5.0000$  &  $3.0000$  &  $2.0000$  &   $1.8750^{(2)}$   & $1.4286^{(2)}$  &  $1.2000^{(2)}$ &   $0.9679^{(4)}$  & \\
$a=0.25$& $5.0000$  & $3.0000$  &  $2.0000$  &   $1.8750^{(2)}$  &  $1.4412^{(2)}$   & $1.2150^{(2)}$ &  $1.0881^{(2)}$  &  $0.8646^{(2)}$\\
$a=0.5$& $5.0000$  &  $3.0000$  &  $2.0000$  &  $1.8750^{(2)}$  &  $1.4783^{(2)}$   & $1.2593^{(2)}$  &  $1.2069^{(2)}$  &  $0.7243^{(2)}$\\
$a=1$& $5.0000$  &  $3.0000$  &  $2.0000$  &  $1.8750^{(2)}$  &  $1.6133^{(2)}$   &$0.4787^{(2)}$&   & \\
$a=3$ & $5.0000$ &  $3.0000$   & $2.2147^{(2)}$  &  $2.0000$   & $1.8750^{(2)}$ &  $-0.5126^{(2)}$  &  &\\ \hline\hline
&\multicolumn{8}{c|}{ \rm H-eigenvalues }\\ \hline
$ a=0$ &   $5.0000$  & $3.0000$  & $2.0000$  & &&&&\\
$a=0.25$&  $5.0009^{(2)}$  & $5.0000$  &  $3.0000$   &  $2.0000$   &  $1.9310^{(2)}$   & &&\\
$a= 0.5$&  $5.0137^{(2)}$  & $5.0000$  &  $3.0000$   &  $2.0000$   &  $1.7517^{(2)}$   &  &   & \\
$a= 1 $&   $5.1812^{(2)}$  & $5.0000$  &  $3.0000$   &  $2.0000$   &  $1.2269^{(2)}$   &   & & \\
$ a= 3 $&  $7.4505^{(2)}$  & $5.0000$  &  $3.0000$   &  $2.0000$   &  $-1.3952^{(2)}$  &   & & \\
\hline
\end{tabular} \label{table4-3}
}\end{table}
For cleanness,  the eigenvectors are not shown.
\end{example}

\begin{example}
$($\cite[Example~4]{Qi05}$)$  \label{example4-4}
Let $\mtA \in \mathtt{S}^4(\mbR^2)$ be the tensor such that
\[
\mtA x^4 = 3 x_1^4 + x_2^4 + 6a x_1^2 x_2^2,
\]
where $a$ is a parameter. As shown in~\cite{Qi05},  this tensor always has two
Z-eigenvalues $\lmd=3,\lmd=1$. When $a<\frac{1}{3}$ or $a>1$,
$\mtA$ has another double Z-eigenvalue
\[
\frac{3(9a^3-6a^2-3a+2)}{2(3a-2)^2}.
\]
For some values of $a$,  the Z-eigenvalues are shown in Table \ref{table4-4}.
For each case of $a$,  the computation takes about $1$ second.
%
%
{\small
\begin{table}[h]
\centering
\caption{Z-Eigenvalues of the tensor in Example~\ref{example4-4}}
  \begin{tabular}{|l|l|c|r||l|c|c|c|}
   \hline
   & \quad $\lambda_1$&$\lambda_2$&$\lambda_3$ \quad \qquad &&$\lambda_1$&$\lambda_2$&$\lambda_3$\\ \hline
$a=-1$ & $3.0000$ & $1.0000$  & $-0.6000^{(2)}$ & $a=0$ & $3.0000$ & $1.0000$ & $0.7500^{(2)}$  \\
$a=0.25$ & $3.0000$ & $1.0000$ & $0.9750^{(2)}$ & $a=0.5$   & $3.0000$ & $1.0000$  & \\
$a=2$  & $4.1250^{(2)}$ & $3.0000$ & $1.0000$  \, \,   &&&&\\ \hline
  \end{tabular}\label{table4-4}
\end{table}
}
\end{example}

\begin{example}\label{example4-5}
$($\cite[Example~3.5]{SSHOPM},~\cite[Example 3.4]{nie2013semidefinite}$)$
Consider the tensor $\mtA \in \mathtt{S}^4(\mbR^3)$ such that
{\smaller \smaller
\[
\baray{rcrcr}
\mtA_{1111} = 0.2883,  &\, \mtA_{1112} = -0.0031,  & \mtA_{1113} = 0.1973,  & \ \mtA_{1122} = -0.2485,  &  \mtA_{1123} = -0.2939,  \\
\mtA_{1133} = 0.3847,  & \mtA_{1222} = \,\,0.2972,  &  \mtA_{1223} = 0.1862,  &  \mtA_{1233} =\,\,0.0919,  & \mtA_{1333} = -0.3619,  \\
\mtA_{2222}=0.1241,  &  \mtA_{2223} = -0.3420,  & \mtA_{2233}=0.2127,  & \mtA_{2333} =\,\, 0.2727,  &  \mtA_{3333}=-0.3054.
\earay
\]}Using Algorithm~\ref{alg:alleig},
we get all the real Z-eigenvalues and Z-eigenvectors.
They are shown in Table~\ref{table4-5}. The computation takes about $9$ seconds.
\begin{table}[h]
\centering {\scriptsize
   \caption{Z-eigenpairs  of the tensor in Example~\ref{example4-5}}
  \begin{tabular}{|@{\hspace{0.5mm}}c@{\hspace{0.5mm}}|c@{\hspace{1mm}}|c@{\hspace{1mm}}|r@{\hspace{1mm}}|c
  @{\hspace{1mm}}|r@{\hspace{1mm}}|r@{\hspace{1mm}}|r@{\hspace{1mm}}|r@{\hspace{1mm}}|r
  @{\hspace{1mm}}|r@{\hspace{1mm}}|r@{\hspace{1mm}}|}
   \hline
$k$   & $1 $ & $2$ & $3$ \quad\ & $4 $ & $5 $\ \ \, \qquad &
             $6$\, \,\qquad & $7$\, \, \qquad & $8$\, \, \qquad
         & $9$ \,  \qquad & $10$ \, \, \qquad & $11$ \,\, \qquad\\ \hline
$\lambda_k$& $0.8893$ & $0.8169$ & $0.5105$ & $0.3633$ & $0.2682$ & $0.2628$
           & $0.2433$ & $0.1735$ & $-0.0451$ & $-0.5629$  & $-1.0954$ \\ \hline
& $0.6672$ & $0.2471$ & $-0.7027$ & $0.8412$ & $-0.2635$ & $0.4722$ & $0.3598$
        & $-0.7780$ & $0.5150$ & $0.2676$  &  $0.6447$ \\
$u_k$ & $0.7160$ & $0.6099$ & $0.4362$ & $0.6616$ & $-0.1318$ & $0.4425$ & $0.8870$
          & $0.9895$ & $0.0947$ & $-0.1088$  & $0.3357$ \\
& $0.9073$ & $0.2531$ & $0.7797$ & $0.6135$ & $0.1250$ & $0.1762$ & $-0.1796$
         & $0.9678$ & $-0.5915$ & $0.7467$  & $0.3043$ \\
\hline
 \end{tabular}\label{table4-5}
 }
\end{table}

\end{example}

%
%
%
%

\begin{example}\label{example4-6}
$($\cite[Example 9.1]{Qi09}$)$
%
%
Consider the tensor $\mtA \in \mathtt{S}^3(\mbR^6)$ such that
\[  \mtA x^3  = x_1^3+\cdots+x_6^3 + 30x_1^2x_2+\cdots+30x_5^2x_6. \]
It is a cubic tensor of dimension six. Its Z-eigenvalues
appear in $\pm$ pairs. In total,  there are $19$ nonnegative Z-eigenvalues:
{\small
\[
\baray{rrrrrrrrlr}
16.2345 &  15.4552  &  15.4298 &   10.9710  &  8.7347& 8.6596 &    8.5979&
8.1888  &    7.2165 &   6.0000\\
5.5674 &   5.5668  &  5.5218  &   5.4817   &  5.1402 &   4.3358  &
4.2464  &     4.0225 &    3.9992. &
\earay
\]} It takes about $10870$ seconds to compute them.
%
%
%
For cleanness,  the Z-eigenvectors are not shown.
\end{example}

Characteristic tensors of hypergraphs have important applications,
as shown in Li~et al.~\cite{li2013z}.
The second largest Z-eigenvalue can be used to get a lower bound for the bipartition width.
The following is such an example.

\begin{example}\label{example-hypergraph6}
$($\cite[Example 6.4]{li2013z}$)$
%
%
Consider the tensor $\mtA \in \mathtt{S}^4(\mbR^6)$ such that
\begin{align*}
  - \mtA x^4 =& (x_1-x_2)^4+(x_1-x_3)^4+(x_1-x_4)^4+(x_1-x_5)^4+(x_1-x_6)^4\\
  &+(x_2-x_3)^4+(x_2-x_4)^4+(x_2-x_5)^4+(x_2-x_6)^4\\
 & +(x_3-x_4)^4+(x_3-x_5)^4+(x_3-x_6)^4\\
  &+(x_4-x_5)^4+(x_4-x_6)^4+(x_5-x_6)^4.
\end{align*}
The polynomial $\mtA x^4$ is symmetric in $x$.
Every permutation of a Z-eigenvector is also a Z-eigenvector.
So,  we can add extra conditions $x_1 \geq x_2 \geq \cdots \geq x_6$ to
\reff{align_general_new} and \reff{largest-r-Jacobian},
while not changing eigenvalues. Then we solve the corresponding semidefinite relaxations.
The tensor $\mtA$ has five real Z-eigenvalues,  which are respectively
\[
\lmd_1= 0.0000,  \, \,   \lmd_2=  -4.0000,  \, \,  \lmd_3=  -4.5000,  \, \,
 \lmd_4=  -6.0000,  \, \, \lmd_5=  -7.2000.
\]
The Z-eigenvectors,  whose entries are ordered monotonically decreasing,
are shown in Table~\ref{table4-7}.
It takes about $280$ seconds to get them.
\begin{table}[h]
\centering {\small
  \caption{Z-eigenpairs of the tensor in Example~\ref{example-hypergraph6}}
  \begin{tabular}{|c|l|r@{\hspace{2mm}}r@{\hspace{2mm}}r@{\hspace{2mm}}r@{\hspace{2mm}}r@{\hspace{2mm}}r|}
   \hline
$k $ & \qquad $\lambda_k$  &\multicolumn{6}{c|}{$u_k^{\text{T}}$} \\ \hline
$ 1$& \,\,\, $0.0000$ & $(0.4082$ & $0.4082$ & $0.4082$ & $0.4082$ & $0.4082$ & $0.4082)$\\
$ 2$&$-4.0000^{(20)}$ & $(0.4082$ & $0.4082$ & $0.4082$ & $-0.4082$ & $-0.4082$ & $-0.4082)$\\
$ 3$& $-4.5000^{(\star)}$\,    &  $(0.2887$  &  $0.2887$ &  $0.2887$  & $0.2887$ & $-0.5774$  & $-0.5774)$ \\
$ 4$& $-6.0000^{(15)} $& $(0.7071$ & $0.0000$ & $0.0000$ & $0.0000$ & $0.0000$ & $-0.7071)$\\
$ 5$&$-7.2000^{(6)}$ \quad & $(0.1826$ &  $0.1826$  &  $0.1826$ &  $0.1826$  & $0.1826$ & $-0.9129)$\\
\hline
\end{tabular}\label{table4-7}
}
\end{table}
In the computation of $\lmd_3$,  the rank condition \reff{cd:flat} is not satisfied.
We get one of its Z-eigenvectors by solving \reff{rand:egvector}.
\end{example}

%
%

\begin{example}\label{example4-8}
$($\cite[Example 2]{xie2013z}$)$
%
%
Consider the tensor $\mtA \in \mathtt{S}^4(\mbR^5)$ such that
\[
\mtA x^4 = (x_1+x_2+x_3+x_4)^4+(x_2+x_3+x_4+x_5)^4.
\]
Using Algorithm~\ref{alg:alleig},
we get all the three real Z-eigenvalues of this tensor,
which are respectively
\[
\lmd_1 = 24.5000, \quad \lmd_2 = 0.5000, \quad  \lmd_3 = 0.0000.
\]
It takes about $320$ seconds to get them. The Z-eigenvectors are shown in Table~\ref{table4-8}.
\begin{table}[h]
\centering  {\small
  \caption{ Z-eigenpairs of the tensor in Example~\ref{example4-8} }
  \begin{tabular}{|c|l|r@{\hspace{2mm}}r@{\hspace{2mm}}r@{\hspace{2mm}}r@{\hspace{2mm}}r|}
   \hline
 k& \qquad $\lambda_k$&\multicolumn{5}{c|}{$u_k^{\text{T}}$} \\ \hline
 $1$& $24.5000$ & $(0.2673$ & $0.5345$ & $0.5345$ & $0.5345$ & $0.2673)$  \\
 $2$& \,  $0.5000$  & $(0.7071$& $0.0000$ & $0.0000$ & $0.0000$ & $-0.7071)$   \\
 $3$& \,  $0.0000^{(\star)}$ & $(0.5253$  & $0.3021$  & $-0.4781$  &  $-0.3472$  &  $0.5318)$ \\
\hline
\end{tabular}\label{table4-8}
}
\end{table}
There are infinitely many Z-eigenvectors for $\lmd_3$.
In the computation of $\lmd_3$,  the rank condition \reff{cd:flat} is not satisfied.
So,  we solve \reff{rand:egvector} and get one of its Z-eigenvectors.
\end{example}

\begin{example}$($\cite[Example~5.7]{cartwright2011number}$)$
\label{example4-9NEW}
Consider the cubic tensor $\mtA \in \mathtt{S}^3(\mbR^3)$ such that
\[
\mtA x^3 =  2x_1^3 + 3x_1x_2^2 + 3x_1x_3^2.
\]
Using Algorithm~\ref{alg:alleig}
we get two real Z-eigenvalues,  which are $\lmd_1 = 2$ and $\lmd_2 = -2$.
Their Z-eigenvectors are $(1,  0,  0)$ and $(-1,  0,  0)$ respectively.
It takes about $1$ second to compute them.
\end{example}

\begin{example}
$($\cite[Example~5.8]{cartwright2011number}$)$
\label{example4-11}
Consider the tensor $\mtA \in \mathtt{S}^6(\mbR^3)$ such that
\[
\mtA x^6 = x_1^4x_2^2 + x_1^2x_2^4 + x_3^6 - 3x_1^2x_2^2x_3^2,
\]
which is the Motzkin polynomial.
Since $\mtA x^6$ has only even powers in each of $x_1, x_2, x_3$,
we can add the extra conditions $x_1\geq 0,  x_2\geq 0,  x_3\geq 0$
to \reff{align_general_new} and \reff{largest-r-Jacobian},
while not changing eigenvalues. Then we solve
the corresponding semidefinite relaxations.
The tensor $\mtA$ has three real H-eigenvalues.
Using Algorithm~\ref{alg:alleig},  we get all of them,  which are respectively
\[
\lmd_1 = 1.0000, \,   \lmd_2 = 0.0555, \,  \lmd_3 = 0.0000.
\]
The H-eigenvectors are shown in Table~\ref{table4-11}.
It takes about $30$ seconds.
\begin{table}[h]
\centering
   \caption{H-eigenpairs of the tensor in Example~\ref{example4-11}}
  \begin{tabular}{|c|c|rrr|}
  \hline
$k$ & $\lambda_k$ & \multicolumn{3}{c|}{$u_k^{\text{T}}$}\\ \hline
$1$& $1.0000^{(3)}$& $(0.0000$ & $0.0000$ & $1.0000)$  \\
 &       & $(0.8909$ & $\pm 0.8909$ & $0.0000)$  \\
$2$& $0.0555^{(8)}$& $(0.4487$  &  $\pm 0.9823$  & $\pm 0.6735)$  \\
 &               & $(0.9823$  &  $\pm 0.4487$  & $\pm 0.6735)$ \\
$3$& $0.0000^{(6)}$& $(1.0000$ & $0.0000$ & $0.0000)$  \\
 &             & $(0.0000$ & $1.0000$ & $0.0000)$  \\
  &            & $(0.8327$ & $\pm0.8327$ & $\pm0.8327)$ \\
\hline
 \end{tabular}\label{table4-11}
\end{table}
\end{example}

\begin{example}\label{example4-10}
$($\cite[Example~3.5]{nie2013semidefinite}$)$
Consider the tensor $\mtA \in \mathtt{S}^3(\mbR^n)$ such that
\[
\mtA_{ijk} = \frac{(-1)^i}{i} + \frac{(-1)^j}{j} +  \frac{(-1)^k}{k}.
\]
For the case $n = 5$,  we get all the real Z-eigenvalues,  which are respectively
\[
\lmd_1 = 9.9779,  \ \lmd_2 = 4.2876,  \ \lmd_3 =  0.0000,  \ \lmd_4 =  -4.2876,  \ \lmd_5 = -9.9779.
\]
The computation takes about $150$ seconds.
The Z-eigenvectors of $\lmd_1, \lmd_2, \lmd_3$ are shown in Table~\ref{table4-10}.
The Z-eigenvector of $\lmd_4$ (resp.,  $\lmd_5$) is just the negative
of that of $\lmd_2$ (resp.,  $\lmd_1$).
\begin{table}[h]
\centering
\caption{Z-eigenpairs of the tensor in Example~\ref{example4-10}}
  \begin{tabular}{|c|l|rrrrr|}
  \hline
$k$ & \quad\ $\lambda_k$   & \multicolumn{5}{c|}{$u_k^{\text{T}}$}\\ \hline
$1$& $9.9779$                & $(-0.7313$  & $-0.1375$ &  $-0.4674$  &  $-0.2365$  &  $-0.4146)$  \\
$2$& $4.2876$                &$(-0.1859$  & $0.7158$  &  $0.2149$  &  $0.5655$  &  $0.2950)$  \\
$3$& $0.0000^{(\star)}$   & $(\, \, \, \, 0.5072$  &  $-0.0980$ &  $0.4280$  &  $-0.7344$  & $-0.1028)$ \\
\hline
 \end{tabular}\label{table4-10}
\end{table}
In the computation of $\lmd_3$,  the rank condition \reff{cd:flat} is not satisfied.
We get a Z-eigenvector for $\lmd_3$ by solving \reff{rand:egvector}.
\end{example}

\begin{example}
$($\cite{nie2013semidefinite}$)$ \label{example4-12}
Consider the tensor $\mtA \in \mathtt{S}^4(\mbR^n)$ such that
\[
\mtA_{i_1 \ldots  i_4} = \sin(i_1+i_2+i_3+i_4).
\]
For the case $n=5$,  we get all the real Z-eigenvalues which are respectively
\[
\lmd_1 = 7.2595,  \ \lmd_2 =  4.6408,   \ \lmd_3 = 0.0000,  \ \lmd_4 =  -3.9204,  \ \lmd_5 =  -8.8463.
\]
The Z-eigenvectors are shown in Table~\ref{table4-12}.
It takes about $370$ seconds.
\begin{table}[h]
\centering  \small
\caption{Z-eigenpairs  of the tensor in Example~\ref{example4-12}}
  \begin{tabular}{|c|c|rrrrr|}
  \hline
$ k$ &  \quad $\lambda_k$   & \multicolumn{5}{c|}{$u_k^{\text{T}}$}\\ \hline
$ 1$ & \, \,    $7.2595$  &  $(\, \, \, \, 0.2686$ &  $0.6150$  &  $0.3959$  &  $-0.1872$  &  $-0.5982)$ \\
$ 2$ & \, \,  $4.6408$ & $(-0.5055$ & $0.1228$  & $0.6382$  & $0.5669$ & $-0.0256)$  \\
$ 3$ & \quad\, \,  $0.0000^{(\star)}$ &  $(\, \, \, \, 0.5935$ & $0.3675$  &  $-0.1224$ &  $0.5449$  &  $0.4341)$  \\
$ 4$ & $-3.9204$ & $(-0.1785$  &  $0.4847$  &  $0.7023$  &  $0.2742$  &  $-0.4060)$   \\
$ 5$ & $-8.8463$ & $(-0.5809$ &  $-0.3563$  &  $0.1959$  &  $0.5680$  &  $0.4179)$  \\
\hline
 \end{tabular}\label{table4-12}
\end{table}
In the computation of $\lmd_3$,  the rank condition \reff{cd:flat} is not satisfied.
We get a Z-eigenvector for $\lmd_3$ by solving \reff{rand:egvector}.
\end{example}

\begin{example}\label{example4-13}
Consider the tensor $\mtA \in \mathtt{S}^4(\mbR^n)$ such that
\[
\mtA_{i_1 \ldots  i_4} = \tan(i_1) +\tan(i_2) + \tan (i_3) + \tan (i_4).
\]
For the case $n=5$,  we get all the real Z-eigenvalues which are respectively
\[
\lmd_1 =  34.5304,  \quad  \lmd_2 = 0.0000,  \quad \lmd_3=-101.1994.
\]
The Z-eigenvectors are displayed in Table~\ref{table4-13}.
It takes about $170$ seconds to compute them.
\begin{table}[h]
\centering \small
   \caption{Z-eigenpairs  of the tensor in Example~\ref{example4-13}}
  \begin{tabular}{|c|r|rrrrr|} \hline
 $ k$ & $\lambda_k$ \quad \, &  \multicolumn{5}{c|}{$u_k^{\text{T}}$} \\ \hline
 $ 1$ &  $34.5304$ \,\,\,\, &      $(\,\,\,\,0.6665$ & $0.1089$ & $0.4132$ &  $0.6070$ & $-0.0692)$ \\
 $ 2$ &$0.0000^{(\star)}$ &       $(-0.7276$ & $-0.1080$ & $0.4238$ & $0.5178$ & $-0.1060)$  \\
 $ 3$ & $-101.1994 \,\,\,\,\,\,$   &  $(\,\,\,\,0.2248$ & $0.5541$ & $0.3744$ & $0.2600$ & $0.6953)$ \\
\hline
 \end{tabular}\label{table4-13}
\end{table}
In the computation of $\lmd_2$,  the rank condition \reff{cd:flat} is not satisfied.
We get a Z-eigenvector for $\lmd_2$ by solving \reff{rand:egvector}.
\end{example}

\begin{example}\label{example4-14}
Consider the tensor $\mtA \in \mathtt{S}^5(\mbR^n)$ such that
\[
\mtA_{i_1 \ldots  i_5} =  \ln(i_1) + \cdots +  \ln(i_5).
\]
For the case $n=4$,  we get all the real Z-eigenvalues which are respectively
\[
\lmd_1 = 132.3070, \, \,  \lmd_2 =    0.7074,  \, \,  \lmd_3 =  0.0000, \,  \,
\lmd_4 =  -0.7074,  \, \,  \lmd_5 =  -132.3070.
\]
The Z-eigenvectors of $\lmd_1, \lmd_2, \lmd_3$ are shown in Table~\ref{table4-14}.
The Z-eigenvector of $\lmd_4$ (resp.,  $\lmd_5$) is just the negative
of that of $\lmd_2$ (resp.,  $\lmd_1$).
It takes about $420$ seconds to compute them.
\begin{table}[h]
\centering \small
   \caption{Z-eigenpairs  of the tensor in Example~\ref{example4-14}}
  \begin{tabular}{|c|r|rrrr|} \hline
$k$ &$ \lambda_k$    \, \, \, \, \,  & \multicolumn{4}{c|}{$u_k^{\text{T}}$}\\ \hline
$1$& $132.3070$    \,\,  &   $(\, \, \, \, 0.4030$ &   $0.4844$ &   $0.5319$  &  $0.5657)$ \\
$2$&   $\,\,\,\,0.7074$  \,\,\,  &  $(-0.9054$ &  $-0.3082$ &   $0.0411$  &  $0.2890)$ \\
$3$&   $\,\,\,\,0.0000^{(\star)}$ &  $(-0.8543$ &  $0.2645$  &  $0.4168$  &  $0.1617)$ \\
\hline
 \end{tabular}\label{table4-14}
\end{table}
In the computation of $\lmd_3$,  the rank condition \reff{cd:flat} is not satisfied.
We get a Z-eigenvector for $\lmd_3$ by solving \reff{rand:egvector}.
\end{example}

\begin{example} \label{nbeig:rand}
({\bf random tensors})
An interesting question is to determine 
the number of real Z-eigenvalues for the symmetric tensors.
Cartwright and Sturmfels~\cite[Theorem~5.5]{cartwright2011number} showed that
every symmetric tensor $\mtA$ of order $m$ and dimension $n$ has at most
\[
M(m, n) \, := \, \frac{(m-1)^n-1}{m-2}
\]
distinct complex Z-eigenvalues.
In \cite{cartwright2011number},
$(\lambda, u)$ and $((-1)^m\lambda, -u)$ are considered to be the same eigenpair.
To be consistent with \cite{cartwright2011number},
for odd ordered tensors, we here only count their nonnegative Z-eigenvalues.
Furthermore, they also showed that when $\mtA$ is generic,
$\mtA$ has exactly $M(m,n)$ distinct complex Z-eigenvalues.
Clearly, $M(m,n)$ is an upper bound for the number of real Z-eigenvalues.
But it might not be sharp for generic tensors.
In this example,  we explore possibilities of distributions of
the numbers of real Z-eigenvalues.
For each $(m, n)$,  we generate $50$ symmetric tensors randomly.
Each symmetric tensor is generated as the symmetrization of
a random nonsymmetric tensor ${\tt randn(i_1, \ldots, i_m)}$ in Matlab.
The number of their real Z-eigenvalues are shown in Table~\ref{table:randexample}.
The notation $k^{\{\mu\}}$ means that
there are $\mu$ instances for which the number of real Z-eigenvalues equals to $k$.
%
%
\begin{table}[h]
\centering \small
\caption{Numbers of real Z-eigenvalues of random symmetric tensors}\label{table:randexample}
\label{table_random}
\begin{tabular}{|@{\hspace{1mm}}c|@{\hspace{1mm}}c|l@{\hspace{2mm}}|} \hline
$(m,n)$ & \tiny{$M(m, n)$} & \qquad \qquad
                   {\rm Numbers of real $Z$-eigenvalues}  \\ \hline
$(3,5)$ & $31$  &  $7, 9^{\{5\}}, 11^{\{7\}}, 13^{\{4\}},
                   15^{\{6\}}, 17^{\{2\}},19^{\{17\}}, 21^{\{7\}}, 23$ \\ \hline
$(3,4)$ & $15$  &  $3^{\{5\}}, 5^{\{8\}}, 7^{\{8\}}, 9^{\{11\}},
                   11^{\{17\}}, 13$ \\  \hline
$(3,3)$ & $7$   &  $1^{\{4\}}, 3^{\{17\}}, 5^{\{ 16\}}, 7^{\{ 13\}}$ \\ \hline
$(4,4)$ & $40$  &  $8^{\{2\}}, 10^{\{3\}}, 12^{\{8\}}, 14^{\{6\}}, 16^{\{8\}},
                   18^{(12)},  20^{\{6\}}, 22^{\{3\}},  24, 28$ \\  \hline
$(4,3)$ & $13$  &  $3^{\{3\}}, 5^{\{5\}}, 7^{\{27\}}, 9^{\{8\}},
                   11^{\{7\}} $  \\ \hline
$(5,4)$ & $85$  &  $13,15^{\{4\}}, 17^{\{8\}}, 19^{\{2\}}, 21^{\{8\}}, 23^{\{8\}},
                   25^{\{7\}}, 27^{\{9\}},   31^{\{2\}}, 33 $ \\ \hline
$(5,3)$ & $21$  &  $5^{\{2\}}, 7^{\{9\}}, 9^{\{13\}}, 11^{\{17\}},
                   13^{\{7\}}, 15^{\{2\}}$  \\ \hline
\end{tabular}
 \end{table}
The table confirms that $M(m,n)$ is an upper bound for the numbers of real Z-eigenvalues.
Moreover, the numbers of real Z-eigenvalues are not evenly distributed.
We do not know the reason for such distributions.
\end{example}

Theoretically, Algorithm \ref{alg:alleig} is able to compute all real eigenvalues
for all symmetric tensors, provided that the computer has sufficient capacity.
In practice, the sizes of symmetric tensors,
for which the eigenvalues can be computed by Algorithm \ref{alg:alleig},
depend on the computer memory and the relaxation order $N$.
The length of the variable $y$ in \reff{equ_hierarchy_p1}
and \reff{equ_hierarchy_pk} is $\binom{n+2N}{2N}$.
It grows fast in the order $N$. In our computational experiences,
for general tensors, a small order $N$ is often enough.
This fact was observed for random tensors in Example~\ref{nbeig:rand}.
However, for some special tensors, a big order $N$ might be required. For such cases, it is often very hard to compute all real eigenvalues.

\bigskip

A different approach for computing all real eigenvalues
is based on solving the system \reff{Beig} directly for its real solutions.
This can be done by using the numerical solver {\tt NSolve}
provided by {\tt Mathematica}.
Generally, {\tt NSolve} can solve relatively small problems.
The following is such an example.

\begin{example} \label{compare:Nsolve}
Consider the symmetric tensor $\mtA \in \mathtt{S}^4(\mbR^n)$ such that
\begin{align*}
  \mtA x^4 =& (x_1-x_2)^4+\cdots+(x_1-x_{n})^4+(x_2-x_3)^4+\cdots+(x_2-x_{n})^4\\
  &+\cdots+(x_{n-1}-x_n)^4.
\end{align*}
Like in Example \ref{example-hypergraph6},
we can add extra conditions $x_1 \geq x_2 \geq \cdots \geq x_n$ to
\reff{align_general_new} and \reff{largest-r-Jacobian},
while not changing eigenvalues.
We compute its real Z-eigenvalues.
The computational results are shown in Table \ref{table4-17}.
\begin{table}[h]
\centering \small
   \caption{Z-eigenpairs  of the tensor in Example~\ref{compare:Nsolve}}
  \begin{tabular}{|c|c|r|lllll|} \hline
 & $alg.$& \text{time(s)} & \multicolumn{5}{c|}{Z-eigenvalues}\\ \hline
 \multirow{2}[0]{*}{$n=4$} &{\rm Alg.\ \ref{alg:alleig}} & $\ \ 3.6$ &$5.3333$&$5.0000$&$4.0000$&$0.0000$&\\
                  &{\tt NSolve}& $\ 19.3$&$5.3333$&$5.0000$&$4.0000$&$0.0000$&\\
\hline
 \multirow{2}[0]{*}{$n=5$} &{\rm Alg.\ \ref{alg:alleig}}  & $274.5$ &$6.2500$&$5.5000$&$4.2500$&$4.1667$&$0.0000$\\
                  &{\tt NSolve}& $-$&$-$&$-$&$-$&$-$ &$-$\\
\hline
\multirow{2}[0]{*}{  $n=6$ } &{\rm Alg.\ \ref{alg:alleig}}  & $280.2$ &$7.2000$&$6.0000$&$4.5000$&$4.0000$&$0.0000$\\
                  &{\tt NSolve}& $-$&$-$&$-$&$-$&$-$ &$-$\\
\hline
\multirow{2}[0]{*}{ $n=7$ } &{\rm Alg.\ \ref{alg:alleig}}  & $9565.6$ &$ 8.1667^{(2)}$&$6.5000$&$4.9000^{(2)}$&$4.8846^{(2)}$&$4.7500$ \\
 &&&$4.1667$&$4.0883^{(2)}$&$0.0000$&&\\
                  &{\tt NSolve}& $-$&$-$&$-$&$-$&$-$ &$-$\\
\hline
\multirow{2}[0]{*}{ $n=8$ } &{\rm Alg.\ \ref{alg:alleig}}  & $938.2$ &$ 9.1429^{(2)}$&$7.0000$&$5.3333^{(2)}$&$-$&$-$ \\
                  &{\tt NSolve}& $-$&$-$&$-$&$-$&$-$ &$-$\\
\hline
\multirow{2}[0]{*}{ $n=9$ } &{\rm Alg.\ \ref{alg:alleig}}  & $4173.8$ &$ 10.1250^{(2)}$&$7.5000$&$5.7857^{(2)}$&$-$ & $-$ \\
                  &{\tt NSolve}& $-$&$-$&$-$&$-$&$-$ &$-$\\
\hline
\multirow{2}[0]{*}{ $n=10$ } &{\rm Alg.\ \ref{alg:alleig}}  & $15310.5$ &$ 11.1111^{(2)}$&$8.0000$&$6.2500^{(2)}$&$-$&$-$ \\
                  &{\tt NSolve}& $-$&$-$&$-$&$-$&$-$ &$-$\\
\hline
 \end{tabular}\label{table4-17}
\end{table}
For the case $n=4$, Algorithm \ref{alg:alleig} takes about $3$ seconds,
while {\tt NSolve} takes about $19$ seconds.
They both get all the real Z-eigenvalues correctly.
For the case $n=5$, Algorithm \ref{alg:alleig} gets all the real Z-eigenvalues
in about $274$ seconds, while {\tt NSolve} can't get answers in $5$ hours
(we terminated the computation after $5$ hours).
In Table \ref{table4-17}, ``$-$" means that no computational results are returned.
We can also get all real eigenvalues for $n=6,7$.
For the bigger $n = 8, 9, 10$, we can get the first three largest Z-eigenvalues,
but the other smaller Z-eigenvalues cannot be obtained.
This is because, for such cases, we need to use the relaxation order $N=4$,
which causes the computer to run out of memory.
%
%
For $n = 8, 9, 10$, the reported time is only
for the first three biggest Z-eigenvalues.
For the values of $n$ bigger than $10$, the computer runs out of memory
and we cannot get the eigenvalues.
%
%
\end{example}

In Algorithm~\ref{alg:alleig}, if the real eigenvalues are not
separated well, then the positive number $\dt>0$
need to be chosen very small.
%
%
We consider the following
example, thanks to an anonymous referee.

\begin{example}
\label{example4-17}
Consider the tensor $\mtA \in \mathtt{S}^3(\mbR^2)$ such that
\[
\mtA_{111} = 1,\quad \mtA_{222} = 1+10^{-6},
\]
and all the other entries are zeros. In Algorithm \ref{alg:alleig},
to get the real Z-eigenvalues correctly, the value of $\dt$
decreased to be smaller than $10^{-6}$ during the loop.
The computed nonnegative real Z-eigenvalues are
\[
\lmd_1 = 1.000001,  \quad  \lmd_2 = 1.000000,  \quad \lmd_3=0.707107.
\]
The whole computation takes about $2$ seconds.
\end{example}

%
%

We conclude this section by exploring how Algorithm \ref{alg:alleig} scales
in terms of sizes of tensors.

\begin{example} \label{scale:rand}
We explore the sizes of symmetric tensors for which Algorithm \ref{alg:alleig}
can get all their Z-eigenvalues. Randomly generated symmetric tensors are tested,
in the same way as in Example~\ref{nbeig:rand}.
The dimensions and orders of random symmetric tensors,
whose real Z-eigenvalues can be found by Algorithm \ref{alg:alleig},
are shown in Table \ref{table_scaling}.
In the left half of Table \ref{table_scaling},
we choose values $n=3,4,5,6$. For each $n$ of them,
we list the values of $m>2$ such that
we can find all real Z-eigenvalues by Algorithm \ref{alg:alleig}.
Similarly, in the right half of Table \ref{table_scaling},
we choose values $m=3,4,5,6$. For each $m$ of them,
we list the values of $n$ such that
we can find all real Z-eigenvalues by Algorithm \ref{alg:alleig}.
\end{example}
\begin{table}[h]
\centering \small
\caption{Scaling of Algorithm \ref{alg:alleig} for computing all the Z-eigenvalues}
\label{table_scaling}
 \begin{tabular}{||c|cccccccc||c|cccccc||}     \hline
 $n$& \multicolumn{8}{c||}{$m$} &$m$& \multicolumn{6}{c||}{$n$}\\ \hline
 3 & 3 & 4 & 5 & 6& 7 &8&9&10  &   3 & 2 & 3 & 4 & 5 & 6 &7 \\
 4 & 3 & 4 & 5 &6 & &  && &     4 & 2 & 3 & 4 & 5 & 6 & \\
 5 & 3 &  4 & 5 &  & &   && &      5 & 2 & 3 & 4 & 5  &  & \\
 6 & 3 & 4 &  &  & &   &&  &       6 & 2 & 3 & 4 &  &  & \\
 \hline
 \end{tabular}
 \end{table}

\bigskip
\noindent {\bf Acknowledgement} \,
Chun-Feng Cui was partially supported by the Chinese NSF Grant (no. 11301016).
Yu-Hong Dai was partially
supported by the Chinese NSF grants (nos. 11331012 and 81173633) and the China
National Funds for Distinguished Young Scientists (no. 11125107). Jiawang Nie was partially
supported by the NSF grants DMS-0844775, DMS-1417985.

\end{document}